\documentclass{amsart}
\usepackage{verbatim}
\usepackage{rotating} %for \includegraphix[angle=270]
\usepackage{amssymb}
\usepackage{mathrsfs,amsfonts,amsmath,amssymb,epsfig,amscd,xy,amsthm,pb-diagram} 
\newtheorem{theorem}{Theorem}[section]
\newtheorem{proposition}[theorem]{Proposition}

\newtheorem{lemma}[theorem]{Lemma}

\newtheorem{corollary}[theorem]{Corollary}

\newtheorem{definition}[theorem]{Definition}
\newtheorem{Problems}[theorem]{Problems}

\theoremstyle{plain}
\theoremstyle{remark}

\newtheorem{remark}[theorem]{Remark}
\newtheorem{example}[theorem]{Example}

\newcommand{\C}{{\mathbb C}}

\newcommand{\Q}{{\mathbb Q}}

\newcommand{\R}{{\mathbb R}}

\newcommand{\Z}{{\mathbb Z}}
\newcommand{\N}{{\mathbb N}}

\newcommand{\Qbar}{\bar{\Q}}

\DeclareMathOperator{\Gal}{Gal}

\DeclareMathOperator{\Norm}{N}

\DeclareMathOperator{\Supp}{Supp}

\newcommand{\scrI}{\mathscr{I}}

\author{Khoa D.~Nguyen}
\address{
Khoa D.~Nguyen \\
Department of Mathematics and Statistics\\
University of Calgary\\
2500 University Drive NW\\
Calgary, T2N 1N4, Alberta, Canada
}
\email{dangkhoa.nguyen@ucalgary.ca}

\thanks{We are partially supported by the NSERC Discovery Grant RGPIN-2025-04439 and a Canada Research Chair Tier-2 funding from the Government of Canada.}
\keywords{Bernoulli convolutions, Weil heights, Roth's theorem}
\subjclass[2020]{Primary: 11J68, 11J71. Secondary: 42A38.}

\begin{document}
	\title[Salem's third problem]{Algebraic numbers and Fourier analysis: Salem's third problem}
	
	\date{April 2026}
	
	\begin{abstract}
 	In 1963, Rapha{\"e}l Salem concluded his highly influential book ``Algebraic Numbers and Fourier Analysis'' with a list of four unsolved problems. The first two problems remain wide open while the last problem on the absolute continuity of Bernoulli convolutions has seen significant progress over the years including recent results by Shmerkin and Varj\'u. In this paper, we solve the third problem concerning the vanishing at infinity of the
 	product of Fourier transforms of Bernoulli convolutions each of which does not vanish at infinity.  Our solution uses tools in diophantine approximation such as the theory of Weil heights and Lang's general formulation of Roth's theorem.
	\end{abstract}
	
	\maketitle

	\section{Introduction}\label{sec:intro}
	Throughout this paper, let $\N$ denote the set of positive integers and let $\N_0=\N\cup\{0\}$. For a real number $x$, we let $\Vert x\Vert$ denote the distance from $x$ to the nearest integer. 
	We say that the non-zero complex numbers
	$\alpha_1,\ldots,\alpha_n$ are multiplicatively dependent if there exists a non-zero vector 
	$(m_1,\ldots,m_n)\in\Z^n$ such that
	$\alpha_1^{m_1}\cdots\alpha_n^{m_n}=1$; otherwise they are called multiplicatively independent. Let $\lambda\in (0,1)\setminus\{1/2\}$ and let $\mu_\lambda$ be the associated Bernoulli convolution, namely the  distribution of $\displaystyle\sum_{n=0}^{\infty}\pm\lambda^n$ where the signs are chosen independently with probability $1/2$. For a survey of results in the 20th century, we refer the reader to \cite{PSS00}. The Fourier transform of $\mu_{\lambda}$ is:
	$$\widehat{\mu_\lambda}(u)=\int_{-\infty}^{\infty} e^{itu}\,d\mu_{\lambda}(t)=\prod_{n=0}^{\infty}\cos(\lambda^n u).$$
		In 1939, Erd\H{o}s \cite{Erdos1939} proved that if $\lambda^{-1}$ is a Pisot number then $\widehat{\mu_\lambda}(u)$ does not tend to $0$ as $u$ tends to infinity. The converse statement was proved by Salem \cite{Salem1943} in 1943.
	   
	   Many problems and results mentioned in this paper are at the confluence of three important research directions each of which has, on average, over 100 years of development. The first direction is the theory of sets of uniqueness and sets of multiplicity for trigonometric series. This  started  with work of Cantor \cite{Cantor1870,Cantor1872} in the 1870s and culminated in Salem's landmark result
	   \cite{Salem1943} that for $0<\lambda<1/2$, the set $E(\lambda):=\Supp(\mu_\lambda)$ is a set of uniqueness if and only if $\lambda^{-1}$ is Pisot. The second direction started from work of 
	   Erd\H{o}s, Jessen, Kershner, and Wintner in the 1930s \cite{JW1935,KW1935,Wintner1935,Erdos1939} concerning the absolute continuity of
	   $\mu_{\lambda}$. The third direction is the study of diophantine properties of the sequence $(\Vert s_n\Vert)_{n}$ for certain special sequences $(s_n)_n$. This problem featured in the 1912 ICM report by Hardy and Littlewood \cite{HL1913} with motivation from much earlier work concerning estimates of trigonometric sums in analytic number theory. In \cite{HL1913}, the authors mainly dealt with the case when $s_n$ is a polynomial function in $n$ and noted, by specific examples, that the case of exponential sequences such as $s_n=\lambda^n u$ should yield very different results. This culminated in work of Pisot \cite{Pisot1938} and Salem \cite{Salem1945} giving rise to what are nowadays called Pisot numbers and Salem numbers.

     We will not attempt to give a proper introduction to these directions or an adequate survey of recent results. Instead, we provide a concise version of Salem's list of four unsolved problems in \cite{Salem1963} with an emphasis on the third problem which will be solved in this paper and make some related comments.
     \begin{Problems}[Salem's list of 4 unsolved problems]
     \leavevmode
     \begin{itemize}
     		\item [1.] Do there exist a transcendental number $\theta>1$ and a positive number $u$ such that
     		$\displaystyle\lim_{n\to\infty}\Vert\theta^nu\Vert=0$?
     		
     		\smallskip
     		
     		\item [2.] Determine the closure of the set of Salem numbers.
     		
     		\smallskip
     		
     		\item [3.] It is known that $\displaystyle\lim_{u\to\infty}\widehat{\mu_\lambda}(u)=0$ if and only if
     		$\lambda^{-1}$ is not a Pisot number \cite{Erdos1939,Salem1943}. Now let $\lambda_1,\lambda_2\in(0,1)\setminus\{1/2\}$ such that both $\lambda_1^{-1}$ and $\lambda_2^{-1}$ are Pisot numbers. \footnote{The precise quote from Salem's book using our notation is as follows. ``What is the behavior of the product $\widehat{\mu_{\lambda_1}}(\pi u)\cdot\widehat{\mu_{\lambda_2}}(\pi u)$ as $u\to\infty$? Can this product tend to zero? Example, $\lambda_1=\frac{1}{5}$, $\lambda_2=\frac{1}{7}$.'' The first question is rather broad and the second question clarifies exactly what Salem wanted to ask.}When
     		do we have $\displaystyle\lim_{u\to\infty} \widehat{\mu_{\lambda_1}}(u)\cdot\widehat{\mu_{\lambda_2}}(u)=0$? 
     		 As explained in \cite{KS1958} and \cite[p.~62]{Salem1963}, when this happens and $\lambda_1$ and $\lambda_2$ are sufficiently small then the set $E(\lambda_1)+E(\lambda_2)$ is a set of multiplicity.
     		
     		\smallskip
     		
     		\item [4.] Determine the values of $\lambda\in (1/2,1)$ for which $\mu_{\lambda}$ is absolutely continuous.
     \end{itemize}
     \end{Problems}
      
      Problem 1 was first proposed by Hardy \cite{Hardy1919} in 1919 and it also 
      appeared as the first open problem in \cite[Chapter 10]{Bug12_DM}. It seems to be Salem's favorite: it appears at least twice in his book \cite{Salem1963} and again in his other work. Problem 4 came from work of Wintner and others in the 1930s as mentioned above. To our best knowledge, the remaining two problems were originally proposed by Salem. 
      
      Problem 1 and Problem 2 are wide open. In fact, even the special case of Problem 2 stating that the closure of the set of Salem numbers does not contain $1$ (equivalently, there is a lower bound $c>1$ for \emph{all} Salem numbers) remains a major open problem in algebraic number theory. This is also a special case of the famous Lehmer's conjecture \cite{Lehmer1933}.  Problem 4, on the other hand, has seen significant progress over the years. 
      In 1995, Solomyak \cite{Solomyak1995} proved that $\mu_{\lambda}$ is absolutely continuous with an $L^2$ density for almost all $\lambda\in (1/2,1)$. In 2014, Shmerkin \cite{Shmerkin2014} established absolute continuity outside a set of Hausdorff dimension $0$ using Hochman's work \cite{Hochman2014}. In 2019, Shmerkin \cite[Theorem~1.3(i)]{Shmerkin2019} upgraded this into having $L^q$ density for all finite $q>1$. In a ``transversal direction'' to Shmerkin's work (see the comments in \cite[p.~326]{Shmerkin2019}), Varj\'u \cite{Varju2019_absolute}
      proved a remarkable result that yields the absolute continuity of $\mu_{\lambda}$ for every algebraic number
      $\lambda\in (1-c',1)$ where $c'$ is an absolute constant. A closely related problem is to determine the dimension of $\mu_\lambda$. Some recent significant results are obtained by Breuillard-Varj\'u \cite{BV2019,BV2020} and
      Varj\'u \cite{Varju2019_dimension} which use, again, the breakthrough work of Hochman \cite{Hochman2014}.
      
      In this paper, we solve Salem's third problem. To follow Salem\footnote{Salem's book used the notation $\Gamma$. We replace $\Gamma$ by $G$ to avoid any potential confusion with the Gamma function.} and emphasize the connection to diophantine properties of the sequence $(\Vert\lambda^n u\Vert)_n$, from now on we work with the function:
      $$G_{\lambda}(u):=\widehat{\mu_\lambda}(\pi u)=\prod_{n=0}^{\infty}\cos(\pi\lambda^n u).$$
      \begin{theorem}\label{thm:solving Salem}
      Let $\lambda_1,\lambda_2\in (0,1)\setminus\{1/2\}$ such that both $\lambda_1^{-1}$ and $\lambda_2^{-1}$
      are Pisot numbers. Then $\displaystyle\lim_{u\to\infty} G_{\lambda_1}(u)\cdot G_{\lambda_2}(u)=0$
      if and only if $\lambda_1$ and $\lambda_2$ are multiplicatively independent. 
      \end{theorem}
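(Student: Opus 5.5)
The plan is to prove the two directions separately. The "only if" direction is an explicit construction in the spirit of Erd\H{o}s. The "if" direction combines Pisot/Salem-type structure results with a diophantine inequality coming from Lang's formulation of Roth's theorem.

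\emph{Dependent $\Rightarrow$ no decay.} Suppose $\lambda_1^a=\lambda_2^b$ with $a,b\in\N$ (positive exponents are forced since $0<\lambda_i<1$). Put $\theta:=\lambda_1^{-a}=\lambda_2^{-b}$ and $u_k:=\theta^k$. Then
$$G_{\lambda_1}(u_k)=\prod_{j=1}^{ak}\cos(\pi\lambda_1^{-j})\cdot G_{\lambda_1}(1),$$
and the same holds for $\lambda_2$ with $bk$ in place of $ak$. Since $\lambda_i^{-1}$ is Pisot, $\Vert\lambda_i^{-j}\Vert\to0$ geometrically, so $\sum_j\Vert\lambda_i^{-j}\Vert^2<\infty$ and the finite products converge to a limit as $k\to\infty$. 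This limit is nonzero once no factor vanishes. First, $\cos(\pi\lambda_i^{-j})\neq0$ because an algebraic integer is never a half odd integer. Second, $\cos(\pi\lambda_i^{n})\neq0$ for $n\ge0$: if $\lambda_i^n=1/(m+\tfrac12)$, then $\lambda_i^{-n}$ is a rational algebraic integer equal to $m+\tfrac12$, which is impossible. Hence $|G_{\lambda_1}(u_k)G_{\lambda_2}(u_k)|$ stays bounded away from $0$ along $u_k\to\infty$.

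\emph{Independent $\Rightarrow$ decay.} Write $\theta_i=\lambda_i^{-1}$, fix $\varepsilon>0$, and suppose $|G_{\lambda_1}(u)G_{\lambda_2}(u)|\ge\varepsilon$ for arbitrarily large $u$.

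The first step is a quantitative form of Salem's argument. Since $|\cos(\pi x)|\le 1-c\Vert x\Vert^2$, the bound $|G_{\lambda_i}(u)|\ge\varepsilon$ forces $\sum_{n}\Vert\lambda_i^nu\Vert^2\le C(\varepsilon)$. Write $u=\theta_i^{N_i}x_i$ with $x_i\in[1,\theta_i)$. Then the integers $r_n$ nearest to $\theta_i^{n}x_i$, for $0\le n\le N_i$, satisfy the linear recurrence given by the minimal polynomial of $\theta_i$ except at a bounded number of indices, which is the standard Pisot argument. From this I expect to deduce
$$u=\theta_i^{m_i}\beta_i+\eta_i .$$
Here $\beta_i$ ranges over a \emph{finite} set $B_i(\varepsilon)\subset\Q(\theta_i)^\times$ of numbers with bounded denominators and bounded conjugates, $m_i\to\infty$ with $u$, and $|\eta_i|\le C'(\varepsilon)$. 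Moreover, the recurrence's tail control shows that $\eta_i$ is a sum of contributions of conjugates $\theta_i'^{\,m}$, all of modulus $<1$.

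The second step compares the two expressions. Passing to a subsequence, the $\beta_i$ are fixed and
$$|\theta_1^{m_1}\beta_1-\theta_2^{m_2}\beta_2|\le 2C'.$$
Set $x:=\theta_1^{m_1}\beta_1/(\theta_2^{m_2}\beta_2)$, which lies in $K=\Q(\theta_1,\theta_2)$. Multiplicative independence ensures $x\ne1$, and indeed that these $x$ are pairwise distinct. Then $|x-1|_\infty\ll\theta_1^{-m_1}$ at the real place defining $u$. At the archimedean places $v$ corresponding to conjugates $\sigma$ with $|\sigma\theta_1|,|\sigma\theta_2|<1$, one also has $|x-1|_v$ small in suitable mixed cases, and the non-archimedean places over the denominators of $\beta_i$ contribute boundedly. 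The key input is Lang's version of Roth's theorem, applied with $S$ the archimedean places together with the bounded set of finite places:
$$\sum_{v\in S}\log\min(1,|x-1|_v)\ge-(2+\delta)h(x)-O(1),$$
valid for all but finitely many $x$. Since $h(x)\asymp m_1\log\theta_1+m_2\log\theta_2$ and $m_1\log\theta_1\approx m_2\log\theta_2$, I must show the left side is $\le-(2+\delta')h(x)$, which yields a contradiction.

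This height bookkeeping is the main obstacle. A single real place only gives exponent roughly $1$, so I must harvest smallness of $|x-1|_v$ at \emph{all} places in $S$. The point is that $\sigma(x)$ is close to $1$ whenever both $\sigma\theta_1^{m_1}$ and $\sigma\theta_2^{m_2}$ are large together or small together in the relevant sense. If this total is insufficient, I would instead apply Roth (or the Subspace theorem) to the three-term unit equation
$$\theta_1^{m_1}\beta_1-\theta_2^{m_2}\beta_2=\eta,$$
where $\eta$ is itself an exponentially controlled combination of conjugate powers. This set-up should exhibit the required saving.
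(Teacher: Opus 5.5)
Your ``if'' direction has a genuine gap in its first step. The claimed structure $u=\theta_i^{m_i}\beta_i+\eta_i$, with $\beta_i$ from a \emph{finite} set and $|\eta_i|\le C'(\varepsilon)$, is false.

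\emph{Counterexample.} Take $\lambda=1/3$ and $u_m=3^m+3^{m'}$ with $m'=\lfloor m/2\rfloor$. The factors $\cos(\pi u_m3^{-n})$ are $\pm1$ for $n\le m'$ and $\pm\cos(\pi 3^{-(n-m')})$ for $m'<n\le m$. For $n>m$ their arguments are at most $\frac{4\pi}{3}3^{-(n-m)}$. Hence $|G_{1/3}(u_m)|\ge c>0$ uniformly in $m$. Now suppose $u_m=3^{m_1}\beta+\eta$ with $\beta$ in a fixed finite $B\subset\Q^\times$ of common denominator $q$ and $|\eta|\le C'$. Then $q\eta$ is a bounded integer and $m_1\to\infty$. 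Since $v_3(qu_m)=v_3(q)+m'\to\infty$ while $v_3(q\eta)$ would stay bounded, you get $\eta=0$. But then $\beta=u_m3^{-m_1}$ has numerator divisible by $3^{m-m'}+1\to\infty$, a contradiction.

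\emph{Why it fails.} The recurrence does break at only boundedly many indices, but these breaks can lie at widely separated scales. So $u$ is in general a bounded sum of terms $\theta^{m_b}\gamma_b$ at different scales plus $O(1)$, not one such term plus $O(1)$. Your second step fixes $\beta_1,\beta_2$ and uses $|\theta_1^{m_1}\beta_1-\theta_2^{m_2}\beta_2|\le 2C'$, so it has nothing to act on. Following the true structure would lead to a many-term $S$-unit inequality, needing the Subspace theorem and an analysis of degenerate subsums.

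\emph{What the paper does instead.} It avoids any global structure. Proposition~\ref{prop:k ell exist} pigeonholes over the nested windows $[\lfloor e_i/N^{s}\rfloor,\lfloor e_i/N^{s-1}\rfloor)$, using the \emph{same} $s$ for $\lambda_1$ and $\lambda_2$. This gives $k\asymp\ell$ with $k\ge\lfloor e_1/N^{C_2}\rfloor$ such that $\Vert a_n\theta_1^j\Vert<1/C_1$ on all of $[k,Nk)$, and likewise for $b_n$. The recurrence on this single window yields $\alpha_n\in\Q(\theta_1)$ with $|a_n-\alpha_n|\ll\theta_1^{-(N-2)k}$ and $H(\alpha_n)\ll\theta_1^{2k}$. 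So the approximants have \emph{growing} height, which absorbs the lower-scale blocks of $u$. The approximation quality still beats that height by a factor of order $N$ in the exponent.

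Even granting your first step, the Roth bookkeeping is missing its key mechanism; the right move is not to seek smallness of $|x-1|_v$ at other archimedean places. Since $\theta_1^{e_1}/\theta_2^{e_2}$ is an $S$-unit, compare $t_n$ with $0$ at those $v\in S\setminus\{\tilde v\}$ where $|t_n|_v\le1$, and with $\infty$ where $|t_n|_v>1$. First pass to a subsequence on which this partition is fixed. Using $H(t_n)=H(1/t_n)$ and the height bound on $\alpha_n/\beta_n$, these places contribute $\ll C^{k_n+\ell_n}H(t_n)^{-2}$, with $C$ independent of $N$. The real place contributes a factor exponentially small in $(N-2)(k_n+\ell_n)$. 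Since $k_n\asymp\ell_n$, the two error terms merge into this single saving. Also $\log H(t_n)\asymp e_1(n)+e_2(n)\ll N^{C_{12}}(k_n+\ell_n)$. So for $N$ large you get $H(t_n)^{-2-\epsilon}$, contradicting Roth.

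Finally, there is a slip in your ``only if'' part. For $n\ge1$ the vanishing case is $\lambda_i^n=\frac12$, i.e.\ $\lambda_i^{-n}=2$, and $2$ \emph{is} a rational algebraic integer, so your reason does not apply. The correct argument is that every conjugate of $\theta_i$ would then have modulus $2^{1/n}>1$. Hence $\theta_i=2$, which is excluded by $\lambda_i\ne\frac12$. The paper simply cites Erd\H{o}s for this direction.
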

      
      As explained below, the ``only if'' part follows immediately from \cite{Erdos1939}. Hence the whole difficulty 
      in proving Theorem~\ref{thm:solving Salem} lies in proving the ``if'' part. Roughly speaking, while it is 
      possible to 
      have many (i.e.~the cardinality of the continuum) pairs of separate sequences of values of $u$ approaching infinity, say $(u^{(1)}_n)_n$ and $(u^{(2)}_n)_n$, such that the $G_{\lambda_i}(u^{(i)}_n)$'s stay away from $0$ 
      for $i\in\{1,2\}$,
      Theorem~\ref{thm:solving Salem} confirms the heuristic that if $\lambda_1$ and $\lambda_2$ are multiplicatively independent then the multiplication-by-$\lambda_1$ and multiplication-by-$\lambda_2$ maps do not have a ``common structure'' so that it is impossible to have a \emph{common} sequence of values of $u$ approaching infinity for which both the $G_{\lambda_1}(u)$'s and $G_{\lambda_2}(u)$'s stay away from $0$. This heuristic is similar to the one underlying many conjectures made by Furstenberg in the 1960s, see \cite[Section~1.1]{Shmerkin2019} for more details. 
      
      To increase potential applications in fractal geometry, we will prove a more general result that allows functions of the form $G_{\lambda}(f(u))$ such as $G_{\lambda}(au+b)$, $G_{\lambda}(\sqrt[3]{au^2+bu+c})$, etc. where $a>0$ is an algebraic number. First, we need the following:
      
      \begin{definition}\label{def:algebraic main term}
      Let $C>0$. A function $f:\ (C,\infty)\rightarrow \mathbb{R}$ is said to have an algebraic main term if 
      there exist a rational number $r>0$, an algebraic number $\alpha>0$, and a real number $\epsilon>0$ such that
      $$f(u)=\alpha u^r+O(u^{r-\epsilon})$$
      for every $u\in (C,\infty)$.
      \end{definition}
      
      \begin{example}
      Consider a function of the form $\sqrt{au^3+bu^2+cu+d}$ where $a>0$ is algebraic and $b$, $c$, and $d$ are arbitrary real numbers. We have a sufficiently large $C$ such that the restriction of this 
      to $(C,\infty)$ yields a function $f:\ (C,\infty)\rightarrow \mathbb{R}$ satisfying:
      $$f(u)=a^{1/2}u^{3/2}+O(u^{1/2})$$
      for every $u\in (C,\infty)$.  Therefore the function $f$ has an algebraic main term. 
      In this paper, we work with functions of the form $G_{\lambda}(f(u))$ when $u$ tends to infinity. Therefore the exact value of $C$ is not important and we only need to define the original function
      $\sqrt{au^3+bu^2+cu+d}$ for all sufficiently large $u$. 
      \end{example}
       
       \begin{remark}\label{rem:u in terms of w}
       Let $f$ be a function satisfying the conditions in Definition~\ref{def:algebraic main term}. 
       From
       $w:=f(u)=\alpha u^{r}+O(u^{r-\epsilon})$, 
       we have $\alpha u^{r}=w+O(w^{(r-\epsilon)/r})$. Hence
       $$u=\alpha^{-1/r}w^{1/r}+O(w^{(1-\epsilon)/r}).$$
       This means the multi-valued inverse $u=f^{-1}(w)$  has the algebraic main term
       $\alpha^{-1/r}w^{1/r}$ and the error term $O(w^{(1-\epsilon)/r})$.
       \end{remark}

       \begin{theorem}\label{thm:algebraic main term}
       Let $\lambda_1,\lambda_2\in (0,1)\setminus\{1/2\}$ such that both $\lambda_1^{-1}$ and $\lambda_2^{-1}$ are Pisot numbers. Let $f_1$ and $f_2$ be functions that have an algebraic main term.
       If $\lambda_1$ and $\lambda_2$ are multiplicatively independent then
       $$\lim_{u\to\infty} G_{\lambda_1}(f_1(u))\cdot G_{\lambda_2}(f_2(u))=0.$$
       \end{theorem}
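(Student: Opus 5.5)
Assume for contradiction that there are $u_k\to\infty$ and $c>0$ with $|G_{\lambda_1}(f_1(u_k))\,G_{\lambda_2}(f_2(u_k))|\ge c$ for all $k$. The first step converts this into diophantine information. Write $\theta_i=\lambda_i^{-1}$ (Pisot), $w_{i,k}=f_i(u_k)$, and choose $N_{i,k}$ with $\theta_i^{N_{i,k}}\le w_{i,k}<\theta_i^{N_{i,k}+1}$. Then
$$G_{\lambda_i}(w_{i,k})=\prod_{n\ge0}\cos(\pi\lambda_i^n w_{i,k})$$
contains the factors $\cos(\pi \theta_i^{m} x_{i,k})$ for $0\le m\le N_{i,k}$, where $x_{i,k}=w_{i,k}\theta_i^{-N_{i,k}}\in[1,\theta_i)$. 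A lower bound $c$ on the product therefore forces $\sum_{m\le N_{i,k}}\Vert\theta_i^m x_{i,k}\Vert^2\ll \log(1/c)$, uniformly in $k$. Passing to a subsequence, $x_{i,k}\to x_i$, and by the classical Pisot--Salem argument (as in \cite{Salem1963}) we expect $x_i\in\Q(\theta_i)$. More precisely, we expect $x_{i,k}$ to lie within $\ll\theta_i^{-N_{i,k}}$ of a point of a fixed finitely generated $\Z$-lattice $\frac{1}{D_i}\Z[\theta_i]\cdot\theta_i^{-L}$. The mechanism is standard: the linear recurrence given by the minimal polynomial of $\theta_i$ forces the nearest integers $a_m$ of $\theta_i^m x_{i,k}$ to satisfy that recurrence once the errors are small, with only boundedly many exceptional indices. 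The conclusion is
$$w_{i,k}=\beta_{i,k}\theta_i^{N_{i,k}'}+O(1),$$
where $\beta_{i,k}$ ranges over a \emph{finite} set $B_i\subset \overline{\Q}^{\times}$ and $N_{i,k}'=N_{i,k}+O(1)$. After passing to a subsequence, $\beta_{i,k}=\beta_i$ is constant.

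The second step combines this with the algebraic main terms. By Definition~\ref{def:algebraic main term} and Remark~\ref{rem:u in terms of w}, $u_k=\alpha_i^{-1/r_i}w_{i,k}^{1/r_i}(1+O(w_{i,k}^{-\epsilon/r_i}))$. Equating the two expressions for $u_k$ gives, with $\gamma=(\beta_1^{1/r_1}\alpha_1^{-1/r_1})/(\beta_2^{1/r_2}\alpha_2^{-1/r_2})$ algebraic,
$$\left|\gamma\,\theta_1^{N_{1,k}/r_1}\theta_2^{-N_{2,k}/r_2}-1\right|\ll u_k^{-\delta}$$
for some $\delta>0$. Raising to the power $R=\mathrm{lcm}(r_1,r_2)$-numerators clears fractional exponents, so we obtain integers $a_k=N_{1,k}p_1$ and $b_k=N_{2,k}p_2$, both tending to infinity, with
$$|\gamma'\theta_1^{a_k}\theta_2^{-b_k}-1|\ll H^{-\delta'},\qquad H=\max(\theta_1^{a_k},\theta_2^{b_k}).$$
Multiplicative independence of $\theta_1,\theta_2$ guarantees that the numbers $\gamma'\theta_1^{a}\theta_2^{-b}$ are pairwise distinct and $\ne1$ except for at most one pair $(a,b)$.

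The third step, which I expect to be the main obstacle, is to derive a contradiction from this exponentially strong approximation. Linear forms in logarithms only give $|\cdot|\gg H^{-\kappa\log}$-type lower bounds (polynomial in $a_k$), which are far too weak against $H^{-\delta'}$. Instead I would use Lang's version of Roth's theorem over the number field $K=\Q(\theta_1,\theta_2,\gamma')$ with a finite set $S$ of places containing the archimedean places and those where $\theta_1,\theta_2$ are non-units. The elements $\xi_k=\gamma'\theta_1^{a_k}\theta_2^{-b_k}$ are $S$-units up to the fixed factor $\gamma'$, and $h(\xi_k)\asymp a_k+b_k\asymp\log H$. The key input is the Pisot property: all conjugates of $\theta_i$ other than $\theta_i$ itself have modulus $<1$, so at every archimedean place $v$ other than the distinguished real one, $|\xi_k|_v$ is controlled. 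This lets us compare the single approximation $|\xi_k-1|_{v_0}\ll H^{-\delta'}=e^{-c\,h(\xi_k)}$ with the total height. The subtlety is that the non-unit places of $\theta_i$ contribute to the height, and I would handle them by tracking $|\xi_k|_v$ at all $v\in S$. One then applies the $S$-unit / Roth--Lang bound
$$\prod_{v\in S}\min(1,|\xi_k-1|_v)\ge H(\xi_k)^{-\eta}$$
for all but finitely many $k$, for any $\eta>0$; this is a consequence of Lang's Roth theorem approximating the fixed algebraic point $1$. Choosing $\eta<\delta'/C$ yields a contradiction for large $k$, which proves Theorem~\ref{thm:algebraic main term}. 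The delicate points are the lattice/finite-set claim in the first step and ensuring that the exponent $\delta'$ is measured against the height $h(\xi_k)$ rather than merely $\log|\xi_k|$.
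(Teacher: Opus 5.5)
\textbf{There is a genuine gap, and it is in your first step.} Its conclusion is that $w_{i,k}=\beta_{i,k}\theta_i^{N'_{i,k}}+O(1)$ with $\beta_{i,k}$ in a \emph{finite} set. You derive it from the single bound $|G_{\lambda_i}(w_{i,k})|\ge c$, and from that hypothesis alone it is false.

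Take $\lambda=1/3$ and $w_k=3^{N_k}+3^{M_k}$ with $M_k=\lfloor N_k/2\rfloor$.
\begin{itemize}
\item For $0\le n\le M_k$, the number $3^{-n}w_k$ is a sum of two odd integers, so $\cos(\pi 3^{-n}w_k)=1$.
\item For $M_k<n\le N_k$, one gets $\cos(\pi 3^{-n}w_k)=-\cos(\pi 3^{-(n-M_k)})$.
\item For $n>N_k$, one has $0<3^{-n}w_k\le\frac43\,3^{-(n-N_k)}\le\frac49$.
\end{itemize}
Hence $|G_{1/3}(w_k)|\ge\prod_{j\ge1}\cos(\pi 3^{-j})\cos(\tfrac{4\pi}{3}3^{-j})>0$ for every $k$. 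Yet $w_k3^{-N_k}=1+3^{M_k-N_k}$ takes infinitely many values accumulating only at $1$. Its distance from $1$ is $3^{M_k-N_k}$, not $O(3^{-N_k})$. So it is not within $O(3^{-N_k})$ of $\beta 3^{j}$ with $\beta$ in a finite set and $j$ bounded, nor of the discrete set $\frac1D3^{-L}\Z$.

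The flaw in your ``standard mechanism'' is that the boundedly many exceptional indices can sit anywhere in $[0,N_{i,k}]$, for instance in the middle. The recurrence on the top run of good indices then expresses $x_{i,k}$ through an algebraic number whose height grows with $k$; here $H(1+3^{M_k-N_k})\asymp 3^{N_k-M_k}$. The Pisot--Salem argument only places the \emph{limit} $x_i$ in $\Q(\theta_i)$, which says nothing at scale $\theta_i^{-N_{i,k}}$.

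\textbf{What is missing.} You cannot reduce to $|\gamma'\theta_1^{a_k}\theta_2^{-b_k}-1|\ll H^{-\delta'}$ with a \emph{fixed} $\gamma'$. The missing idea is how to work with approximants of growing but controlled height. The paper obtains them as follows.
\begin{itemize}
\item It pigeonholes over the blocks of indices $[\lfloor e/N^{i}\rfloor,\lfloor e/N^{i-1}\rfloor)$, $2\le i\le C_2$, choosing the \emph{same} $i$ for $\lambda_1$ and $\lambda_2$. This is where the product of the two $G$'s is used. It forces $k/\ell$ to be bounded, with $\lfloor e_1(n)/N^{C_2}\rfloor\le k\le e_1(n)/N^2$.
\item A run of good indices $[k,Nk)$ then gives $\alpha_n\in\Q(\theta_1)$ with $|a_n-\alpha_n|\ll\theta_1^{-(N-2)k}$ and $H(\alpha_n)\ll\theta_1^{2k}$; similarly for $\beta_n$.
\item In the Roth step, $t_n$ is an $S$-unit times $\alpha_n^{D/c_1}/\beta_n^{D/c_2}$. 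This non-$S$-unit factor has height at most $C_{23}^{k+\ell}\le C_{23}^{(e_1(n)+e_2(n))/N^2}$.
\item That height is negligible against $H(t_n)\ge C^{e_1(n)+e_2(n)}$, which comes from multiplicative independence. It is also beaten by the approximation quality $C_{19}^{-(N-2)(k+\ell)}$ once $N$ is large.
\item The lower bound $k+\ell\gg(e_1(n)+e_2(n))/N^{C_2}$ supplies the extra factor $H(t_n)^{-\epsilon}$.
\end{itemize}
Your Step 3 never faces this varying non-$S$-unit factor, so it addresses an easier problem than the one that actually arises.

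\textbf{Two smaller remarks.}
\begin{itemize}
\item In your fixed-$\gamma'$ situation, Baker's theorem would in fact suffice. A lower bound polynomial in $\max(a_k,b_k)$ is far stronger than an exponentially small upper bound, so your dismissal of linear forms in logarithms is backwards.
\item Lang's Roth theorem has exponent $\kappa>2$, so it does not directly give $\prod_{v\in S}\min(1,|\xi_k-1|_v)\ge H(\xi_k)^{-\eta}$ for every $\eta>0$. Instead, aim at the limit only at the distinguished real place. At the other places of $S$, aim at $0$ or $\infty$ according as $|\xi_k|_v\le 1$ or $>1$. This turns the $S$-unit structure into a factor $H(\xi_k)^{-2}$, as the paper does with $S_{\le}$ and $S_{>}$.
\end{itemize}
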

       
       Theorem~\ref{thm:solving Salem} follows immediately from \cite{Erdos1939} and Theorem~\ref{thm:algebraic main term}:
       \begin{proof}[Proof of Theorem~\ref{thm:solving Salem} assuming Theorem~\ref{thm:algebraic main term}] Assume that $\lambda_1$ and $\lambda_2$ are multiplicatively dependent. For $i=1,2$, it follows from \cite{Erdos1939} that
       there exists $C_i>0$ such that
       $\vert G_{\lambda_i}(\lambda_i^{-n})\vert\geq C_i$ for every $n\in\N$.
       Therefore if $u$ is a common power (with a negative exponent) of $\lambda_1$ and $\lambda_2$ then $\vert G_{\lambda_1}(u)G_{\lambda_2}(u)\vert\geq C_1C_2$.
       
       We now assume that $\lambda_1$ and $\lambda_2$ are multiplicatively independent. By Theorem~\ref{thm:algebraic main term} in the case when $f_1$ and $f_2$ are the identity map, we have $\displaystyle\lim_{u\to\infty}G_{\lambda_1}(u)G_{\lambda_2}(u)=0$. This finishes the proof. 
       \end{proof}
       
       The rest of this paper is devoted to the proof of Theorem~\ref{thm:algebraic main term}. 
       We will use tools in diophantine approximation such as the theory of 
       Weil heights \cite{Lan62_DG,BG06_HI} and 
       the more general formulation of Roth's theorem \cite{Roth1955} by Ridout \cite{Ridout1957}
       and Lang \cite[Chapter~VI]{Lan62_DG}. In fact, all these technical ingredients for the solution of 
       Salem's third problem were available shortly before the publication of his book.
       
       We conclude this section with a  brief explanation of the two main steps in the proof of Theorem~\ref{thm:algebraic main term}. To simplify the discussion at the moment, we assume that $f_1$ and $f_2$ are the identity function. Suppose $G_{\lambda_1}(u)G_{\lambda_2}(u)$ does not tend to $0$ as $u$ tends to infinity. This means there exist $C>0$ and a sequence $(u_n)_{n}$ approaching infinity such that $\vert G_{\lambda_i}(u_n)\vert\geq C$ for $i\in\{1,2\}$ and for every $n$. Let $\theta_1=\lambda_1^{-1}$,
        $\theta_2=\lambda_2^{-1}$, and write
       $$u_n=a_n\theta_1^{e_1(n)}=b_n\theta_2^{e_2(n)}$$
       where $e_1(n),e_2(n)\in\N$, $1\leq a_n<\theta_1$, and $1\leq b_n<\theta_2$.
       
       The first step is to approximate the real number $a_n$ by an algebraic number $\alpha_n\in\Q(\theta_1)$ when $n$ is sufficiently large.
      This approximation is strong in the sense that the Weil height 
      $H(\alpha_n)$ is very small compared to $\vert a_n-\alpha_n\vert^{-1}$ and $u_n$.
      Likewise, we can have a strong approximation $\beta_n\in\Q(\theta_2)$ for $b_n$. 
      This allows us to approximate $\displaystyle 1=\frac{a_n\theta_1^{e_1(n)}}{b_n\theta_2^{e_2(n)}}$ by 
      the algebraic numbers 
      $\displaystyle\frac{\alpha_n\theta_1^{e_1(n)}}{\beta_n\theta_2^{e_2(n)}}$. 
      Then in the second step, we apply Roth's theorem and arrive at a contradiction.
      Although the general idea for the proof can be explained simply like this, in practice, we need to make careful and delicate choices of the involving parameters to carry out the idea successfully. Although the general case when $f_1$ and $f_2$
      are arbitrary functions having an algebraic main term is more technical, it can be resolved following a similar approach.

     \section{Absolute values, Weil heights, and Roth's theorem}  
      Let $M_{\Q}=M_{\Q}^{\infty}\cup M_{\Q}^0$ where
  	$M_{\Q}^0$ is the set of $p$-adic absolute values
  	and $M_{\Q}^{\infty}$
  	is the singleton consisting of the usual archimedean 
  	absolute value. More generally, for every number field $K$,
  	write $M_K=M_K^\infty\cup M_K^0$ where
  	$M_K^\infty$ is the set of archimedean places and 
  	$M_K^0$ is the set of finite places. 
  	For every
  	$w\in M_K$, let $K_w$ denote the completion of 
  	$K$ with respect to $w$ and denote
  	$d_w=[K_w:\Q_v]$ where $v$ is the restriction of
  	$w$ to $\Q$. For every $v\in M_{\Q}$, we have:
    $$\sum_{w\in M_K,\,v\mid w}d_w=[K:\Q].$$
    
    Following \cite[Chapter~1]{BG06_HI}, 
  	for every $w\in M_K$ restricting to $v$ on $\Q$,
  	we normalize $\vert \cdot\vert_w$ as follows:
  	$$\vert x\vert_w = \vert \Norm_{K_w/\Q_v}(x) \vert_v^{1/[K:\Q]}.$$ 
      We note that this normalization in \cite{BG06_HI} is different from the one in \cite{Lan62_DG}. Throughout the paper, we use $\vert\cdot\vert$ to denote the usual absolute value on $\C$ and its restriction to $K$. Hence if
      $w\in M_K$ is the corresponding archimedean place then our normalization gives:
      \begin{equation*}
\vert x\vert_w= 
\begin{cases} 
  \vert x\vert^{1/[K:\Q]}, & \text{if } K_w=\R \\
  \vert x\vert^{2/[K:\Q]}, & \text{if } K_w=\C 
\end{cases}
\end{equation*}  
    For every $x\in K^*$, we have the product formula:
	$$\prod_{v\in M_K}\vert x\vert_v=1.$$
	
    We define the absolute multiplicative Weil height $H:\ \Qbar\rightarrow \R_{\geq 1}$ as follows. Let $x\in\Qbar$, choose a number field $K$ containing $x$, and define
    $$H(x)=\prod_{v\in M_K}\max\{1,\vert x\vert_v\}.$$
    This is independent of the choice of $K$ \cite[Chapter~1]{BG06_HI}. We will also use the logarithmic height
    $h:\ \Qbar\rightarrow \R_{\geq 0}$ where $h(x)=\log(H(x))$.
    
    \begin{proposition}\label{prop:height basic}
    Let $x,y\in \Qbar$. We have:
    \begin{itemize}
    \item [(i)] $H(xy)\leq H(x)H(y)$ and $H(x^m)=H(x)^{\vert m\vert}$ for every integer $m$. If $y\neq 0$, we have $H(x/y)\geq H(x)H(y)^{-1}$.
    \item [(ii)] $H(x)\geq 1$ and the equality occurs iff $x=0$ or $x$ is a root of unity.
    \item [(iii)] Northcott property: there are only finitely many algebraic numbers of bounded degree and bounded height.
    \end{itemize}
    \end{proposition}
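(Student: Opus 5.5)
Every claim in Proposition~\ref{prop:height basic} is standard; we prove each directly from the definition $H(x)=\prod_{v\in M_K}\max\{1,\vert x\vert_v\}$, working in a number field $K$ containing $x$ and $y$ (allowed since $H$ does not depend on $K$). For (i), the submultiplicativity comes from the pointwise inequality $\max\{1,\vert xy\vert_v\}\le\max\{1,\vert x\vert_v\}\max\{1,\vert y\vert_v\}$, which we multiply over $v\in M_K$. For $m\ge0$ we have $\max\{1,\vert x\vert_v^m\}=\max\{1,\vert x\vert_v\}^m$, so $H(x^m)=H(x)^m$. It remains to show $H(x^{-1})=H(x)$ for $x\ne0$. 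We write $\max\{1,\vert x^{-1}\vert_v\}=\max\{1,\vert x\vert_v\}\,\vert x\vert_v^{-1}$ and take the product over $v$; the product formula removes the factor $\prod_v\vert x\vert_v^{-1}=1$. The negative exponents then follow from the case $m\ge0$ applied to $x^{-1}$. For the quotient bound we write $x=(x/y)\cdot y$, so submultiplicativity gives $H(x)\le H(x/y)H(y)$, which rearranges to $H(x/y)\ge H(x)H(y)^{-1}$.

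For (ii), $H\ge1$ holds because every factor is at least $1$. If $x=0$ every factor equals $1$. If $x$ is a root of unity, then $\vert x\vert_v=1$ for all $v$, again because some power of $x$ equals $1$. Conversely, suppose $H(x)=1$ with $x\ne0$. Then $\vert x\vert_v\le1$ for every $v$, and by (i) the same holds for every power $x^m$ with $m\ge1$, since $H(x^m)=H(x)^m=1$. These powers all lie in $K$ and have height $1$ and degree at most $[K:\Q]$. By (iii) the set $\{x^m\}$ is finite, so $x^a=x^b$ for some $a\ne b$, and $x$ is a root of unity. This gives the plan's only logical dependency: we prove (iii) before the converse in (ii).

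For (iii), Northcott, we fix $d$ and $B$ and let $x$ have degree $\le d$ with $H(x)\le B$. Let $F$ be the minimal polynomial of $x$ over $\Z$, with leading coefficient $a_0$ and roots $x_1,\dots,x_e$, where $e\le d$. The key identity is $H(x)^{e}=\vert a_0\vert\prod_i\max\{1,\vert x_i\vert\}$, the Mahler measure. To see this in our normalization, split the places: the archimedean places contribute $\prod_i\max\{1,\vert x_i\vert\}$ through the conjugates, and the finite places contribute $\vert a_0\vert$ by Gauss's lemma applied to the primitive polynomial $F$. Next, each coefficient of $F$ is $\pm a_0$ times an elementary symmetric function of the $x_i$. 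Such a function is bounded in absolute value by $\binom{e}{k}\prod_i\max\{1,\vert x_i\vert\}$. Hence every coefficient is bounded by $2^{e}H(x)^{e}\le 2^dB^d$. There are therefore only finitely many possible integer polynomials $F$, and each has finitely many roots.

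The only step needing care is the identity $H(x)^e=$ Mahler measure in (iii), that is, checking that the local factors in the normalization $\vert\cdot\vert_w=\vert\Norm_{K_w/\Q_v}(\cdot)\vert_v^{1/[K:\Q]}$ assemble correctly. The nonarchimedean part uses Gauss's lemma. Since all of this is textbook material (\cite[Chapter~1]{BG06_HI}), we would simply cite it for (iii) and spell out only (i) and (ii).
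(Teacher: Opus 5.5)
Your proposal is correct and follows the paper's route. The paper derives (i) directly from the definition and cites \cite{BG06_HI} for (ii) and (iii); you supply the standard textbook details, namely Kronecker's theorem via Northcott and Northcott via the Mahler-measure identity with coefficient bounds, and you rightly note that $H(x^{-1})=H(x)$ (hence the negative-exponent case of (i)) needs the product formula and not just the definition.
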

    \begin{proof}
    All these properties are well-known. Part (i) follows immediately from the definition. The proof for the other parts can be found in \cite{BG06_HI}.
    \end{proof}
    
    \begin{proposition}\label{prop:Minkowski}
    Let $n$ be a positive integer.
    \begin{itemize}
    \item [(i)] For $x=(x_1,\ldots,x_n)\in \Z^n$, define $\vert x\vert=\vert x_1\vert+\cdots+\vert x_n\vert$. Let $B>0$ and $\psi:\ \Z^n\rightarrow\R$ with the following properties:
    		\begin{itemize}
    			\item[$\bullet$] $\psi(0)=0$ and $\psi(x)\geq B$ for $x\neq 0$.
    			\item[$\bullet$] $\psi(mx)=\vert m\vert\psi(x)$ for $m\in\Z$ and $x\in \Z^n$.
    			\item[$\bullet$] $\psi(x+y)\leq \psi(x)+\psi(y)$ for $x,y\in\Z^n$.
    		\end{itemize}
    		Then there exists $C>0$ such that $\psi(x)\geq C\vert x\vert$ for every $x\in \Z^n$.
    \item [(ii)] Let $\alpha_1,\ldots,\alpha_n\in\Qbar^*$ be multiplicatively independent. Then there exists $C>1$
    such that
    $$H(\alpha_1^{m_1}\cdots\alpha_n^{m_n})\geq C^{\vert m_1\vert+\cdots+\vert m_n\vert}$$
    for every $(m_1,\ldots,m_n)\in\Z^n$.
    \end{itemize}
    \end{proposition}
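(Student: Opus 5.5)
For part (i), I will use a compactness/lattice argument. First, by subadditivity and homogeneity, $\psi$ extends to a function on $\Q^n$ via $\psi(x/m):=\psi(x)/\vert m\vert$. This is well defined because $\psi(mx)=\vert m\vert\psi(x)$. The extension is still subadditive and absolutely homogeneous for rational scalars. It is also bounded above by a linear function of $\vert x\vert$: writing $x=\sum x_ie_i$ gives
$$\psi(x)\leq \max_i\psi(e_i)\,\vert x\vert=:M\vert x\vert.$$
Hence $\vert\psi(x)-\psi(y)\vert\leq M\vert x-y\vert$, so $\psi$ is Lipschitz on $\Q^n$ and extends continuously to a seminorm on $\R^n$.

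The key step is to show this seminorm is actually a norm, i.e.\ $\psi(x)>0$ for every $x\in\R^n\setminus\{0\}$. Once this holds, compactness of the unit sphere $\{\vert x\vert=1\}$ gives $C=\min\psi>0$ there, hence $\psi(x)\geq C\vert x\vert$ on $\Z^n$.

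To prove positivity, suppose $\psi(x_0)=0$ for some real $x_0\neq 0$. Then $\psi(tx_0)=0$ for all real $t$. By Dirichlet's simultaneous approximation, there are integers $q\geq 1$ and integer vectors $p$ with $\vert qtx_0-p\vert<\epsilon$ for suitable large $t$, and with $p\neq 0$ for $t$ large. The Lipschitz bound then gives
$$\psi(p)\leq \psi(qtx_0)+M\epsilon=M\epsilon.$$
Choosing $\epsilon<B/M$ contradicts $\psi(p)\geq B$. This Dirichlet/Kronecker step, which ensures the nonzero lattice point $p$ close to the line $\R x_0$, is the one place needing care.

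For part (ii), I will apply (i) with
$$\psi(m)=h(\alpha_1^{m_1}\cdots\alpha_n^{m_n}).$$
Homogeneity $\psi(km)=\vert k\vert\psi(m)$ is Proposition~\ref{prop:height basic}(i) applied to $x^k$. Subadditivity follows from $h(xy)\leq h(x)+h(y)$, and clearly $\psi(0)=0$.

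For the lower bound $B$, fix a number field $K$ containing all the $\alpha_i$. By multiplicative independence, $\alpha^m$ is not a root of unity for $m\neq 0$, so $h(\alpha^m)>0$ by Proposition~\ref{prop:height basic}(ii). We need a uniform positive lower bound. Since all the $\alpha^m$ lie in $K$, which has bounded degree, Northcott (Proposition~\ref{prop:height basic}(iii)) shows there are only finitely many elements of $K$ with $h\leq 1$. Among those of the form $\alpha^m$ with $m\neq0$, the minimum height is positive, so we may take $B$ to be the minimum of $1$ and that minimum.

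Part (i) then yields $h(\alpha^m)\geq C'\vert m\vert$, and $C=e^{C'}>1$ gives the claim.
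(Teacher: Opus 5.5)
Your proposal is correct. Part (ii) is the paper's argument: $\psi(m)=h(\alpha_1^{m_1}\cdots\alpha_n^{m_n})$, homogeneity and subadditivity from Proposition~\ref{prop:height basic}(i), and the uniform lower bound $B$ from Northcott together with the fact that $\alpha^m$ is not a root of unity for $m\neq 0$.

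The difference is in part (i). The paper gives no argument there and simply cites \cite[pp.~198--199]{Sch03_LR}, whereas you prove it directly:
\begin{itemize}
\item You extend $\psi$ to $\Q^n$ by $\psi(x/m)=\psi(x)/\vert m\vert$. This is well defined, since $kx=my$ gives $\vert k\vert\psi(x)=\vert m\vert\psi(y)$.
\item The bound $\psi(x)\leq M\vert x\vert$ makes $\psi$ Lipschitz, so it becomes a seminorm on $\R^n$.
\item You rule out a nonzero null vector $x_0$ by using Dirichlet to find a lattice point $p$ within $\epsilon<B/M$ of the line $\R x_0$. This $p$ is nonzero once $\vert tx_0\vert>\epsilon$.
\item You take the minimum of $\psi$ on the compact $\ell^1$-sphere.
\end{itemize}
All of these steps check out.

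The citation keeps the paper short. Your version makes the proposition self-contained. It also shows exactly where the hypothesis $\psi\geq B$ on $\Z^n\setminus\{0\}$ is used: to stop the extended seminorm from vanishing along a line, including a line through the origin that contains no nonzero lattice point.
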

    \begin{proof}
    Part (i) should be well-known to Minkowski when he developed his theory Geometry of Numbers in the late 19th century while part (ii) should be well-known to experts in the 1950s or even earlier. However we are unable to locate precise references that far back. For a proof of (i), see \cite[pp.~198--199]{Sch03_LR}. For (ii),
    define $\psi:\ \Z^n\rightarrow\mathbb{R}$ by
    $$\psi(m_1,\ldots,m_n)=h(\alpha_1^{m_1}\cdots\alpha_n^{m_n}).$$
    This function satisfies all the properties of part (i) thanks to Proposition~\ref{prop:height basic}: the existence of a uniform lower bound $B>0$ follows from
    the Northcott property and
     the fact that $\alpha_1^{m_1}\cdots\alpha_n^{m_n}$ is not a root of unity when 
    $(m_1,\ldots,m_n)$ is non-zero. Then we apply part (i) and finish the proof.
    \end{proof}
    
    The final ingredient for the proof of Theorem~\ref{thm:algebraic main term} is Lang's general formulation of Roth's theorem \cite{Roth1955} (the same formulation for the special case of rational numbers was given earlier by Ridout \cite{Ridout1957}).
    \begin{theorem}\label{thm:Roth}
    Let $K$ be a number field with a finite set of places $S$. For each $v\in S$, let $\alpha_v$ be either
    an element of $K_v$ that is algebraic over $K$ or the symbol $\infty$. Let $\kappa>2$. Then there are only finitely many $\beta\in K$ such that 
    $$\prod_{v\in S}\min(1,\vert \beta-\alpha_v\vert_v)\leq H(\beta)^{-\kappa};$$
    when $\alpha_v$ is the symbol $\infty$, we interpret $\vert\beta-\alpha_v\vert_v$ as $1/\vert\beta\vert_v$.
    \end{theorem}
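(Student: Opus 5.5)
We would not reprove the diophantine core from scratch. Instead, the plan is to reduce Theorem~\ref{thm:Roth} to the classical statement of Roth, Ridout and Lang, in which every target $\alpha_v$ is a genuine algebraic element of $K_v$ and absolute values are normalized as in \cite{Lan62_DG}. For that core we would recall the Thue--Siegel--Roth method.

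\textbf{Step 1: normalization.} Let $d=[K:\Q]$. Lang's absolute values satisfy $\Vert x\Vert_v=\vert x\vert_v^{d}$, and his relative height is $H_K(\beta)=H(\beta)^{d}$. Raising our inequality
$$\prod_{v\in S}\min(1,\vert\beta-\alpha_v\vert_v)\leq H(\beta)^{-\kappa}$$
to the $d$-th power gives exactly Lang's inequality with the same exponent $\kappa$. So the two formulations are equivalent, and it suffices to prove the statement in Lang's normalization.

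\textbf{Step 2: removing the symbol $\infty$.} Choose $c\in K$ different from every finite $\alpha_v$, and apply the M\"obius map $\gamma=1/(\beta-c)$. Set $\alpha'_v=1/(\alpha_v-c)$ when $\alpha_v$ is finite, and $\alpha'_v=0$ when $\alpha_v=\infty$.
\begin{itemize}
\item For finite targets we have $\vert\gamma-\alpha'_v\vert_v=\vert\beta-\alpha_v\vert_v/(\vert\beta-c\vert_v\vert\alpha_v-c\vert_v)$. Whenever $\vert\beta-\alpha_v\vert_v<1$ is small, this is bounded above by a constant multiple of $\vert\beta-\alpha_v\vert_v$.
\item For $\alpha_v=\infty$ and $\vert\beta\vert_v$ large, we have $\vert\gamma\vert_v=1/\vert\beta-c\vert_v\asymp 1/\vert\beta\vert_v$.
\item At places where $\min(1,\cdot)=1$, the corresponding factor is unchanged up to constants.
\item By Proposition~\ref{prop:height basic}(i), $H(\gamma)\leq 2H(c)H(\beta)$ and $H(\beta)\leq 2H(c)H(\gamma)$.
\end{itemize}
Hence a solution $\beta$ for exponent $\kappa$ gives a solution $\gamma$ of $\prod_{v\in S}\min(1,\vert\gamma-\alpha'_v\vert_v)\leq C\,H(\gamma)^{-\kappa}$, with $C$ independent of $\beta$. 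Fix $2<\kappa'<\kappa$. By Northcott (Proposition~\ref{prop:height basic}(iii)), infinitely many such $\gamma$ in the fixed field $K$ must have $H(\gamma)\to\infty$. Then $C\,H(\gamma)^{-\kappa}\leq H(\gamma)^{-\kappa'}$ for all but finitely many of them, so we are reduced to the all-finite-target case with exponent $\kappa'$. The map $\beta\mapsto\gamma$ is injective, so finiteness transfers back.

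\textbf{Step 3: the finite-target case.} Argue by contradiction. Suppose there are infinitely many solutions. By pigeonholing, we may assume that for each $v\in S$ the share $\vert\beta-\alpha_v\vert_v\approx H(\beta)^{-\kappa c_v}$ with $\sum_v c_v=1$ is roughly fixed. Pick solutions $\beta_1,\dots,\beta_m$ with $m$ large and rapidly increasing heights.
\begin{itemize}
\item Use Siegel's lemma over $K$ to build an auxiliary polynomial $P(X_1,\dots,X_m)$ of multidegree $(r_1,\dots,r_m)$ with $r_jh(\beta_j)$ roughly constant. Require $P$ to have small height and high index at $(\alpha_v,\dots,\alpha_v)$ for every $v\in S$, which can be arranged simultaneously.
\item Use Taylor expansion at each $v\in S$, together with the product formula, to show that every derivative of small index vanishes at $(\beta_1,\dots,\beta_m)$, since $\kappa'>2$.
\item Contradict this with Roth's lemma, which bounds the index of $P$ at $(\beta_1,\dots,\beta_m)$ for points of rapidly increasing height.
\end{itemize}

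\textbf{Main obstacle.} We expect Roth's lemma, the nonvanishing and index bound proved by induction on $m$ via generalized Wronskians, to be the hardest step. Since this is precisely the content of \cite{Roth1955,Ridout1957} and \cite[Chapter~VI]{Lan62_DG}, we would cite it rather than reprove it. The genuinely new work here is then the bookkeeping in Steps 1 and 2.
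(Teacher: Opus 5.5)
Your proposal is correct and follows essentially the paper's route. The paper does not reprove Roth's theorem either; it cites Bombieri--Gubler (Theorem~6.2.3 with Remark~6.2.5), and your Steps~1--2 just make that citation's bookkeeping explicit: passing to Lang's normalization via $\Vert x\Vert_v=\vert x\vert_v^{d}$ and $H_K=H^{d}$, and removing the $\infty$-targets by $\beta\mapsto 1/(\beta-c)$ with a slightly smaller exponent $\kappa'>2$. One small correction: the bound $H(\beta-c)\leq 2H(\beta)H(c)$ is the standard additive height inequality, not a consequence of Proposition~\ref{prop:height basic}(i), which only covers products and powers.
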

    \begin{proof}
	See Theorem~6.2.3 and Remark~6.2.5 in  \cite{BG06_HI}.    
    \end{proof}

    \section{A diophantine approximation result}\label{sec:diophantine}
	Throughout this section, let $\lambda_1,\lambda_2\in (0,1)\setminus\{1/2\}$ such that $\theta_1:=\lambda^{-1}$ and $\theta_2:=\lambda_2^{-1}$
	are Pisot numbers. Let $B>1$. We assume that
     $(r_n)_n$ and 
     $(s_n)_n$ are
     sequences approaching infinity such that
	\begin{equation}\label{eq:B}
    \delta:=\inf_n \vert G_{\lambda_1}(r_n)G_{\lambda_2}(s_n)\vert >0\ \text{and}\ r_n^{1/B}\leq s_n\leq r_n^B\ \text{for every $n$.}
    \end{equation}
    In the following $C_0,C_1,\ldots$ denote positive constants depending only on $\lambda_1$, $\lambda_2$, $\delta$, and $B$. We fix a large integer $N$ while $n$ will be an arbitrarily large integer.
	
    Note that $\delta\leq 1$. When $n$ is sufficiently large (so that $r_n\geq \theta_1$ and $s_n\geq \theta_2$), we write
	$$r_n=a_n\theta_1^{e_1(n)}\ \text{and}\ s_n=b_n\theta_2^{e_2(n)}$$
	where $e_1(n),e_2(n)\in\N$, $1\leq a_n<\theta_1$, and $1\leq b_n<\theta_2$. Thanks to \eqref{eq:B}, we have $C_0>1$ such that
    \begin{equation}\label{eq:C0}
        \frac{1}{C_0}<\frac{e_1(n)}{e_2(n)}<C_0.
    \end{equation}
    We have
	\begin{align}\label{eq:RHS for Glambdarnsn}
	\begin{split}
    \vert G_{\lambda_1}(r_n)\vert&=\prod_{-\infty<j\leq e_1(n)}\vert\cos(\pi a_n\theta_1^j)\vert\\
    &=\vert \cos(\pi a_n\theta_1^{e_1(n)})\vert\cdot\vert \cos(\pi a_n\theta_1^{e_1(n)-1})\vert\cdots\\
    \vert G_{\lambda_2}(s_n)\vert&=\prod_{-\infty<j\leq e_2(n)}\vert\cos(\pi b_n\theta_2^j)\vert\\
    &=\vert \cos(\pi b_n\theta_2^{e_2(n)})\vert\cdot\vert \cos(\pi b_n\theta_2^{e_2(n)-1})\vert\cdots
	\end{split}
    \end{align}

    Let $g_1(X)$ and $g_2(X)$ be respectively the minimal polynomials of $\lambda_1$ and $\lambda_2$ over $\Q$. Let
	$C_1:=\max\{\ell^1(g_1),\ell^1(g_2)\}$ 
    where the $\ell^1$-norm of a polynomial is the sum of the absolute values of its coefficients.
     Let $C_2\geq 2$ be a sufficiently large positive integer such that $\delta^{1/(C_2-1)}$ is sufficiently close to $1$ so that the following holds:
		\begin{equation}\label{eq:C2}
		\text{if $x$ is a real number such that $\vert\cos(\pi x)\vert\geq \delta^{1/(C_2-1)}$ then $\Vert x\Vert<\frac{1}{C_1}$.}
		\end{equation}
	
	\begin{proposition}\label{prop:k ell exist}
	When $n$ is sufficiently large, there exist integers $k$ and $\ell$ 
    with the following properties:
    \begin{itemize}
        \item [(a)] $\lfloor e_1(n)/N^{C_2}\rfloor\leq k\leq e_1(n)/N^2$
        \item [(b)] $\lfloor e_2(n)/N^{C_2}\rfloor\leq \ell\leq e_2(n)/N^2$
        \item [(c)] $\displaystyle\Vert a_n\theta_1^j\Vert<\frac{1}{C_1}$ for $k\leq j\leq Nk-1$.
        \item [(d)] $\displaystyle\Vert b_n\theta_2^j\Vert<\frac{1}{C_1}$ for $\ell\leq j\leq N\ell-1$.
        \item [(e)] $\displaystyle\frac{1}{C_3}<\frac{k}{\ell}<C_3$.
    \end{itemize}
	\end{proposition}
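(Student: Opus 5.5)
The plan is a pigeonhole argument on the factors in \eqref{eq:RHS for Glambdarnsn}, run on both products at once and using \emph{the same} dyadic-type block index for both. This is what forces the ratio condition (e).

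Since $r_n,s_n\to\infty$, we have $e_1(n),e_2(n)\to\infty$. Set
$$m_1:=\lfloor e_1(n)/N^{C_2}\rfloor\quad\text{and}\quad m_2:=\lfloor e_2(n)/N^{C_2}\rfloor.$$
For $n$ large (depending on $N$), we have $e_1(n)/N^{C_2}\geq 2$ and $e_2(n)/N^{C_2}\geq 2$, so $m_1,m_2\geq 1$. For $i=0,1,\ldots,C_2-2$, define the blocks
$$I_i:=[N^im_1,\,N^{i+1}m_1-1]\cap\Z\quad\text{and}\quad J_i:=[N^im_2,\,N^{i+1}m_2-1]\cap\Z.$$
These are $C_2-1$ pairwise disjoint blocks of positive integers. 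Since $N^{C_2-1}m_1\leq e_1(n)/N<e_1(n)$, the blocks $I_i$ lie inside the range $j\leq e_1(n)$, and likewise the $J_i$ lie inside $j\leq e_2(n)$. Define
$$P_i:=\prod_{j\in I_i}\vert\cos(\pi a_n\theta_1^j)\vert\cdot\prod_{j\in J_i}\vert\cos(\pi b_n\theta_2^j)\vert.$$
Every factor in \eqref{eq:RHS for Glambdarnsn} has absolute value at most $1$. Discarding the factors outside the blocks therefore gives
$$\prod_{i=0}^{C_2-2}P_i\;\geq\;\vert G_{\lambda_1}(r_n)G_{\lambda_2}(s_n)\vert\;\geq\;\delta.$$
Hence there is some index $i_0$ with $P_{i_0}\geq\delta^{1/(C_2-1)}$. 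Because every factor is at most $1$, each individual factor $\vert\cos(\pi a_n\theta_1^j)\vert$ with $j\in I_{i_0}$ and $\vert\cos(\pi b_n\theta_2^j)\vert$ with $j\in J_{i_0}$ is itself at least $\delta^{1/(C_2-1)}$. By \eqref{eq:C2}, every such $j$ then satisfies $\Vert a_n\theta_1^j\Vert<1/C_1$ (respectively $\Vert b_n\theta_2^j\Vert<1/C_1$).

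Now take $k:=N^{i_0}m_1$ and $\ell:=N^{i_0}m_2$. Then $I_{i_0}=[k,Nk-1]$ and $J_{i_0}=[\ell,N\ell-1]$, so (c) and (d) follow from the previous paragraph. For (a), note that $k\geq m_1=\lfloor e_1(n)/N^{C_2}\rfloor$, and since $i_0\leq C_2-2$,
$$k\leq N^{C_2-2}m_1\leq e_1(n)/N^2.$$
The same computation gives (b).

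For (e), we have $k/\ell=m_1/m_2$. Using $e_2(n)/N^{C_2}\geq 2$, so that $\lfloor x\rfloor\geq x/2$ for $x=e_2(n)/N^{C_2}$, we get
$$\frac{m_1}{m_2}\leq\frac{e_1(n)/N^{C_2}}{e_2(n)/(2N^{C_2})}=2\,\frac{e_1(n)}{e_2(n)}<2C_0$$
by \eqref{eq:C0}. Symmetrically, $m_2/m_1<2C_0$. So $C_3:=2C_0$ works, and it depends only on $\lambda_1,\lambda_2,\delta,B$, not on $N$, as required.

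The only delicate point is securing (e) simultaneously with (c) and (d). Choosing $k$ and $\ell$ by separate pigeonholes for each product could land them in blocks with different indices $i$, and then their ratio would be off by a power of $N$. Pigeonholing on the combined products $P_i$ avoids this. The rest is bookkeeping, namely taking $n$ large enough (in terms of $N$) that $m_1,m_2\geq1$ and the floor estimate is valid.
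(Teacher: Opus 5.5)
Your proof is correct and follows essentially the paper's argument: you pigeonhole on the combined products over $C_2-1$ geometric blocks, using a common block index for both $\theta_1$ and $\theta_2$, which is exactly what secures (e). The only cosmetic difference is that you build the blocks upward as $[N^i m_1, N^{i+1}m_1-1]$ from $m_1=\lfloor e_1(n)/N^{C_2}\rfloor$, so that $k/\ell=m_1/m_2$ exactly, whereas the paper uses the blocks $[\lfloor e_1(n)/N^i\rfloor,\lfloor e_1(n)/N^{i-1}\rfloor-1]$ together with the inequality $\lfloor e_1(n)/N^{i-1}\rfloor\geq N\lfloor e_1(n)/N^i\rfloor$.
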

	\begin{proof}	
		We assume $n$ is sufficiently large so that $\min\{e_1(n),e_2(n)\}\geq N^{C_2}$. 
        For $1\leq i\leq C_2$, let $k_i=\lfloor e_1(n)/N^i\rfloor$ and $\ell_i=\lfloor e_2(n)/N^i\rfloor$. In other words, the increasing sequence
		$k_{C_2},k_{C_2-1},\ldots,k_1$ is exactly
		$$\left\lfloor\frac{e_1(n)}{N^{C_2}}\right\rfloor,\left\lfloor\frac{e_1(n)}{N^{C_2-1}}\right\rfloor,\ldots,\left\lfloor\frac{e_1(n)}{N}\right\rfloor$$
		and the increasing sequence 
        $\ell_{C_2},\ell_{C_2-1},\ldots,\ell_1$ is exactly
        $$\left\lfloor\frac{e_2(n)}{N^{C_2}}\right\rfloor,\left\lfloor\frac{e_2(n)}{N^{C_2-1}}\right\rfloor,\ldots,\left\lfloor\frac{e_2(n)}{N}\right\rfloor.$$
        
        For $2\leq i\leq C_2$, let
		$P_i=\displaystyle\prod_{j=k_i}^{k_{i-1}-1}\vert\cos(\pi a_n\theta_1^j)\vert$		
		and $Q_i=\displaystyle\prod_{j=\ell_i}^{\ell_{i-1}-1}\vert\cos(\pi b_n\theta_2^j)\vert$
        so that
		$$P_2\cdots P_{C_2}Q_2\cdots Q_{C_2}=\prod_{j=\lfloor e_1(n)/N^{C_2}\rfloor}^{\lfloor e_1(n)/N\rfloor-1} \vert\cos(\pi a_n\theta_1^j)\vert\cdot \prod_{j=\lfloor e_2(n)/N^{C_2}\rfloor}^{\lfloor e_2(n)/N\rfloor-1} \vert\cos(\pi b_n\theta_2^j)\vert$$		
		  is a partial product of $\vert  G_{\lambda_1}(r_n)\vert\cdot\vert G_{\lambda_2}(s_n)\vert$ thanks to \eqref{eq:RHS for Glambdarnsn}. Since 
		$\delta\leq \vert G_{\lambda_1}(r_n)\vert\cdot\vert G_{\lambda_2}(s_n)\vert$, we have $P_2\cdots P_{C_2}Q_2\cdots Q_{C_2}\geq \delta$ and hence 
		there exists $i\in\{2,\ldots,C_2\}$ such that
		$P_iQ_i\geq \delta^{1/(C_2-1)}$. This implies
		$$\vert\cos(\pi a_n\theta_1^j)\vert \geq\delta^{1/(C_2-1)}\ \text{and hence}\ \Vert a_n\theta_1^j\Vert<\frac{1}{C_1}\ \text{because of \eqref{eq:C2}}$$
		for $k_i\leq j\leq k_{i-1}-1$ and implies
        $$\vert\cos(\pi b_n\theta_2^j)\vert \geq\delta^{1/(C_2-1)}\ \text{and hence}\ \Vert b_n\theta_2^j\Vert<\frac{1}{C_1}\ \text{because of \eqref{eq:C2}}$$
		for $\ell_i\leq j\leq \ell_{i-1}-1$.
        
        Since $k_{i-1}\geq Nk_i$ and $\ell_{i-1}\geq N\ell_i$, we take $k:=k_i$ and $\ell:=\ell_i$ and have that properties
        (a)--(d) hold. It remains to prove property (e).
        From $\displaystyle\frac{k}{\ell}=\frac{k_i}{\ell_i}=\frac{\lfloor e_1(n)/N^i\rfloor}{\lfloor e_2(n)/N^i\rfloor}$, we have
        $$\frac{1}{2}\cdot\frac{e_1(n)}{e_2(n)}<\frac{k}{\ell}<2\cdot\frac{e_1(n)}{e_2(n)}$$
        when $n$ is sufficiently large. Then property (e) follows from
        \eqref{eq:C0}.
	\end{proof}
	
	For the rest of this section, we let $k$ and $\ell$ be as in the statement of Proposition~\ref{prop:k ell exist}. 
    We write the minimal polynomial $g_1(X)$ of $\theta_1$ as:
    $$g_1(X)=X^d+c_{d-1}X^{d-1}+\cdots+c_1X+c_0.$$
    For $k\leq j\leq Nk-1$, let $A_j$ be the nearest integer to $a_n\theta_1^j$, and let $\epsilon_j=a_n\theta_1^j-A_j$ so that 
	\begin{equation}\label{eq:Aj and Bj}	
	a_n\theta_1^j=A_j+\epsilon_j\ \text{and}\ \vert \epsilon_j\vert=\Vert a_n\theta_1^j\Vert<\frac{1}{C_1}.
	\end{equation}
	
	\begin{lemma}\label{lem:linear recurrence}
	The sequence 	$(A_j)_{k\leq j\leq Nk-1}$ satisfies the linear recurrence relation
	\begin{equation}
	    A_{j+d}+c_{d-1}A_{j+d-1}+\cdots+c_1A_{j+1}+c_0A_j=0
	\end{equation}
    for $k\leq j\leq Nk-1-d$.
    \end{lemma}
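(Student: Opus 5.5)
Fix $j$ with $k\leq j\leq Nk-1-d$; then all indices $j,j+1,\ldots,j+d$ lie in $[k,Nk-1]$, so \eqref{eq:Aj and Bj} applies to each. The plan is to show that the integer
$$I_j:=A_{j+d}+c_{d-1}A_{j+d-1}+\cdots+c_1A_{j+1}+c_0A_j$$
has absolute value less than $1$, which forces $I_j=0$.

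The first step uses that $g_1(\theta_1)=0$. Multiplying by $a_n\theta_1^j$ gives
$$a_n\theta_1^{j+d}+c_{d-1}a_n\theta_1^{j+d-1}+\cdots+c_0a_n\theta_1^j=0.$$
Substituting $a_n\theta_1^{i}=A_i+\epsilon_i$ for each $i\in\{j,\ldots,j+d\}$, we obtain
$$I_j=-\bigl(\epsilon_{j+d}+c_{d-1}\epsilon_{j+d-1}+\cdots+c_0\epsilon_j\bigr).$$
Here $I_j$ is an integer, because $\theta_1$ is an algebraic integer (a Pisot number), so each $c_i\in\Z$.

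The second step bounds $|I_j|$ by the triangle inequality:
$$\vert I_j\vert\leq (1+\vert c_{d-1}\vert+\cdots+\vert c_0\vert)\max_i\vert\epsilon_i\vert<\ell^1(g_1)\cdot\frac{1}{C_1}.$$
To get $|I_j|<1$ it suffices that $\ell^1(g_1)\leq C_1$.

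The only point needing care is that $C_1$ was defined via the minimal polynomials of $\lambda_1,\lambda_2$, whereas $g_1$ is here used as the minimal polynomial of $\theta_1$. These have the same $\ell^1$-norm. Indeed, the minimal polynomial of $\lambda_1=\theta_1^{-1}$ is, up to the scalar $c_0$, the reversed polynomial $X^dg_1(1/X)$, and reversal preserves the $\ell^1$-norm. So I will read $C_1$ as the maximum of the $\ell^1$-norms of the monic integer minimal polynomials of $\theta_1,\theta_2$. If one insists on the monic normalization for $\lambda_1$, whose norm is $\ell^1(g_1)/|c_0|\leq\ell^1(g_1)$, one should simply redefine $C_1$ this way; this changes nothing in \eqref{eq:C2} beyond the choice of the constant. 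With $\ell^1(g_1)\leq C_1$ we get $|I_j|<1$, hence $I_j=0$, which is the stated recurrence. There is no real obstacle beyond this normalization check.
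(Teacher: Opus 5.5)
Your proof is correct and is essentially the paper's argument: substitute $a_n\theta_1^i=A_i+\epsilon_i$ into $g_1(\theta_1)\,a_n\theta_1^j=0$, bound the resulting integer by $\ell^1(g_1)\max_i\vert\epsilon_i\vert<\ell^1(g_1)/C_1\leq 1$, and conclude that it vanishes. Your normalization remark is also apt. The paper's definition of $C_1$ via the minimal polynomials of $\lambda_1,\lambda_2$ is evidently meant to use those of $\theta_1,\theta_2$, as the proof's claim $C_1\geq 1+\vert c_{d-1}\vert+\cdots+\vert c_0\vert$ shows, and either reading via primitive integer polynomials or redefining $C_1$ fixes this harmlessly.
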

    \begin{proof}
        By \eqref{eq:Aj and Bj}, the definition of $C_1$ (which gives
        $C_1\geq 1+\vert c_{d-1}\vert+\cdots+\vert c_0\vert$), and the fact that $\theta_1^d+c_{d-1}\theta_1^{d-1}+\cdots+c_0=0$, we have:
        $$\vert A_{j+d}+c_{d-1}A_{j+d-1}+\cdots+c_0A_j\vert=\vert \epsilon_{j+d}+c_{d-1}\epsilon_{j+d-1}+\cdots+c_0\epsilon_j\vert<1$$
        for $k\leq j\leq Nk-1-d$. 
        Since the LHS is an integer, it has to be zero.
    \end{proof}

    Let $t_1=\theta_1,t_2,\ldots,t_d$ be the roots of $g_1(X)$ (i.e.~the Galois conjugates of $\theta_1$). Our next step is to find $\alpha_1,\ldots,\alpha_d$ such that
    \begin{equation}\label{eq:alpha1...alphad}
        A_{j}=\alpha_1t_1^j+\cdots+\alpha_d t_d^j\ \text{for $k\leq j\leq Nk-1$}
    \end{equation}
    Since both sides of \eqref{eq:alpha1...alphad} satisfy the same linear recurrence relation with the companion polynomial $g_1(X)$, it suffices to find $\alpha_1,\ldots,\alpha_d$ such that \eqref{eq:alpha1...alphad} holds for $j=k,\ldots,k+d-1$. We need to solve the linear system
    \begin{equation}
        M(\alpha_1,\ldots,\alpha_d)^T=(A_k,\ldots,A_{k+d-1})^T
    \end{equation}
    where 
    $$M=\begin{pmatrix}
t_1^k & t_2^k & \cdots & t_d^k \\
t_1^{k+1} & t_2^{k+1} & \cdots & t_d^{k+1} \\
\vdots & \vdots & \ddots & \vdots \\
t_1^{k+d-1} & t_2^{k+d-1} & \cdots & t_d^{k+d-1}
\end{pmatrix}$$

    Note that $t_1\cdots t_d=(-1)^dc_0$. Let $\Delta:=\displaystyle\prod_{1\leq i<j\leq d}(t_i-t_j)$. Then the Vandermonde determinant formula gives
    \begin{equation}
        \det(M)=(-1)^{dk}c_0^k\Delta
    \end{equation}
    For $1\leq i\leq d$, let $M_i$ be the matrix obtained by replacing the $i$-th column of $M$ by $(A_k,\ldots,A_{k+d-1})^T$. Cramer's rule gives:
    \begin{equation}\label{eq:Cramer}
        \alpha_i=\frac{\det(M_i)}{\det(M)}=\frac{\det(M_i)}{(-1)^{dk}c_0^k\Delta}
    \end{equation}
    
    \begin{proposition}\label{prop:inequalities using Hadamard}
        We have the following inequalities:
        \begin{itemize}
            \item [(i)] $\vert \det(M_1)\vert\leq C_4\theta_1^k$ and $\vert\det(M_i)\vert\leq C_4\theta_1^{2k}$ for $2\leq i\leq d$.
            
            \item [(ii)] $\vert \alpha_1\vert\leq C_4\theta_1^k$ and $\vert\alpha_i\vert\leq C_4\theta_1^{2k}$ for $2\leq i\leq d$.

            \item [(iii)] $\displaystyle\vert a_n-\alpha_1\vert\leq\frac{C_5}{\theta_1^{(N-2)k}}$. 
        \end{itemize}
    \end{proposition}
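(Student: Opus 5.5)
Since $\theta_1$ is Pisot, $\vert t_i\vert<1$ for $2\leq i\leq d$ while $t_1=\theta_1>1$. The plan is to prove (i) by bounding determinants entry by entry, deduce (ii) from Cramer's rule \eqref{eq:Cramer}, and deduce (iii) from \eqref{eq:alpha1...alphad} at a large index $j$ together with \eqref{eq:Aj and Bj}. Throughout, $C_4,C_5$ may depend on $N$ only through nothing (constants depend on $\lambda_1,\lambda_2,\delta,B$), since $d$ is fixed.

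For (i), I will expand each determinant as a sum over permutations (or use Hadamard's inequality on columns). In $M$, the first column has entries $t_1^{k+m}$ of size $\leq\theta_1^{k+d}$, and columns $i\geq 2$ have entries of modulus $\leq 1$. The replacement column $(A_k,\ldots,A_{k+d-1})^T$ has entries $\vert A_j\vert\leq a_n\theta_1^j+1\leq \theta_1^{k+d}+1$, since $a_n<\theta_1$. For $M_1$, the first column is replaced, so every column has norm $O(\theta_1^k)$ or $O(1)$ with only one large column, giving $\vert\det M_1\vert\leq C_4\theta_1^k$. For $M_i$ with $i\geq2$, both the $t_1$-column and the $A$-column are of size $O(\theta_1^k)$, giving $C_4\theta_1^{2k}$.

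For (ii), divide by $\vert\det M\vert=\vert c_0\vert^k\vert\Delta\vert$. Here $c_0$ is a nonzero integer (up to sign the product of the conjugates of the algebraic integer $\theta_1$), so $\vert c_0\vert\geq1$. Also $\Delta\neq0$ since $g_1$ is separable. Hence $\vert\det M\vert\geq\vert\Delta\vert>0$, and (ii) follows with an enlarged $C_4$.

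For (iii), take $j=Nk-1$ in \eqref{eq:alpha1...alphad}. Together with \eqref{eq:Aj and Bj} this gives
$$a_n\theta_1^{Nk-1}=\alpha_1\theta_1^{Nk-1}+\sum_{i\geq2}\alpha_it_i^{Nk-1}+\epsilon_{Nk-1}.$$
Then I will divide by $\theta_1^{Nk-1}$. The error is at most $\bigl(1+(d-1)C_4\theta_1^{2k}\bigr)\theta_1^{-(Nk-1)}$, using $\vert t_i\vert<1$, and this is $\leq C_5\theta_1^{-(N-2)k}$.

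The only delicate point, and the step I expect to need care, is (ii): making sure the $c_0^k$ in the denominator does not spoil the bounds. This rests on $\theta_1$ being an algebraic integer so that $\vert c_0\vert\geq1$. If instead $\vert c_0\vert$ were less than $1$, one would need to track the bounds differently, but the Pisot hypothesis rules this out.
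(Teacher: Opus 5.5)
Your proposal is correct and follows the paper's proof essentially step for step. You bound the columns of $M_1$ and $M_i$ via Hadamard/Leibniz using the Pisot property, use Cramer's rule with $\vert c_0\vert\geq 1$ for (ii), and evaluate \eqref{eq:alpha1...alphad} at $j=Nk-1$ and divide by $\theta_1^{Nk-1}$ for (iii). The only cosmetic difference is that the paper uses $\vert\Delta\vert\geq 1$ (since $\Delta^2$ is a nonzero integer) to get $\vert\alpha_i\vert\leq\vert\det(M_i)\vert$ with the same $C_4$, whereas you absorb $1/\vert\Delta\vert$ into an enlarged constant, which is equally valid.
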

    \begin{proof}
        From $A_j< a_n\theta_1^j+1< \theta_1^{j+1}+1$ and the fact that $t_1=\theta_1$ is a Pisot number,  
        we apply Hadamard's inequality for $\det(M_1)$ and $\det(M_i)$ for $2\leq i\leq d$ to finish the proof of part (i). Since $c_0$ and $\Delta^2$ are non-zero integers, 
        \eqref{eq:Cramer} implies
        $\vert\alpha_i\vert\leq\vert\det(M_i)\vert$ for $1\leq i\leq d$.
        Then we apply part (i) to conclude part (ii).

        For part (iii), we have:
        \begin{align*}
            a_n\theta^{Nk-1}&=A_{Nk-1}+B_{Nk-1}\\
            &=\alpha_1\theta^{Nk-1}+\alpha_2t_2^{Nk-1}+\cdots+\alpha_dt_d^{Nk-1}+B_{Nk-1}
        \end{align*}
        Dividing both sides by $\theta^{Nk-1}$ and using part (ii), we obtain the desired inequality in part (iii).
    \end{proof}

    In fact, we can bound $\alpha_1$ uniformly for every sufficiently large $n$:
    \begin{corollary}
    When $n$ is sufficiently large, we have $\displaystyle\frac{1}{2}<\alpha_1<\theta_1+\frac{1}{2}$.
    \end{corollary}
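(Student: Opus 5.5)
The plan is to combine Proposition~\ref{prop:inequalities using Hadamard}(iii) with the normalization $1\leq a_n<\theta_1$. The corollary then follows once we know that $(N-2)k\to\infty$ as $n\to\infty$.

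First I would check that $k\to\infty$. By Proposition~\ref{prop:k ell exist}(a), $k\geq\lfloor e_1(n)/N^{C_2}\rfloor$. Since $r_n\to\infty$ and $1\leq a_n<\theta_1$, we have $e_1(n)\to\infty$. The integer $N$ is fixed; we may take $N\geq 3$, which is harmless because $N$ is "large". Then $\lfloor e_1(n)/N^{C_2}\rfloor\to\infty$, so $k\to\infty$ and hence $(N-2)k\to\infty$.

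Next, Proposition~\ref{prop:inequalities using Hadamard}(iii) gives $\vert a_n-\alpha_1\vert\leq C_5\theta_1^{-(N-2)k}$. Since $\theta_1>1$ and $C_5$ depends only on $\lambda_1,\lambda_2,\delta,B$ (and not on $n$), the right-hand side is less than $1/2$ once $n$ is sufficiently large. We also need $\alpha_1$ to be real for the inequality to make sense. Here $\alpha_1=\det(M_1)/\det(M)$, and $M_1$ and $M$ are built from the conjugates $t_1,\ldots,t_d$ of $\theta_1$ together with real integer data $A_j$.

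There are two ways to see that $\alpha_1$ is real. The cleanest is to note that complex conjugation permutes $t_2,\ldots,t_d$ and fixes $t_1=\theta_1$. The solution of the linear system is unique (since $\det M\neq 0$) and the right-hand side is real. So conjugating the system shows that $\overline{\alpha_1}$ is the coefficient attached to $\overline{t_1}=t_1$, which forces $\alpha_1=\overline{\alpha_1}$. Alternatively, one can simply observe that $\vert a_n-\alpha_1\vert<1/2$ already places $\alpha_1$ within $1/2$ of the interval $[1,\theta_1)$ in $\C$, and then use reality to conclude.

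With $\alpha_1$ real and $\vert a_n-\alpha_1\vert<1/2$, we get $\alpha_1>a_n-1/2\geq 1/2$ and $\alpha_1<a_n+1/2<\theta_1+1/2$.

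I expect the only delicate point to be the reality of $\alpha_1$, together with confirming that the constants in Proposition~\ref{prop:inequalities using Hadamard} are uniform in $n$. Both should be routine, following the conjugation argument above and the way the constants were set up.
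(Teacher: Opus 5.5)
Your proposal is correct and follows the paper's own argument: Proposition~\ref{prop:inequalities using Hadamard}(iii) gives $\vert a_n-\alpha_1\vert<1/2$ for large $n$, and the normalization $1\leq a_n<\theta_1$ then yields the bounds. The paper leaves implicit two points you check: that $k\to\infty$, and that $\alpha_1$ is real. Your conjugation argument for reality is fine, and it also follows from Lemma~\ref{lem:Galois conjugates of alphai}, since $\alpha_1\in\Q(\theta_1)\subset\R$. Your ``alternative'' route to reality, however, is circular and can simply be dropped.
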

    \begin{proof}
        By (iii) of the previous proposition, we have
        $$\vert a_n-\alpha_1\vert\leq \frac{C_5}{\theta_1^{(N-2)k}}<\frac{1}{2}$$
        when $n$ is sufficiently large. Since $1\leq a_n<\theta_1$, we finish the proof.
    \end{proof}
    
    \begin{lemma}\label{lem:Galois conjugates of alphai}
        Let $\sigma\in\Gal(\Qbar/\Q)$. For $1\leq i,j\leq d$, if $\sigma(t_i)=t_j$ then
        $\sigma(\alpha_i)=\alpha_j$. Consequently, for every $1\leq i\leq d$, we have $\alpha_i\in\Q(t_i)$ and the set $\{\alpha_1,\ldots,\alpha_d\}$ (after removing repeated elements) is exactly the set of all Galois conjugates of $\alpha_i$.
    \end{lemma}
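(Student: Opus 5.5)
The idea is to use the uniqueness of the solution of the linear system $M(\alpha_1,\ldots,\alpha_d)^T=(A_k,\ldots,A_{k+d-1})^T$ together with the fact that the right-hand side consists of rational integers, which $\sigma$ fixes. Since $g_1$ is irreducible, its roots $t_1,\ldots,t_d$ are distinct, so $\det(M)=(-1)^{dk}c_0^k\Delta\neq 0$. Hence $(\alpha_1,\ldots,\alpha_d)$ is the unique solution of
$$\sum_{i=1}^d \alpha_i t_i^{j}=A_j,\qquad j=k,\ldots,k+d-1.$$

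Fix $\sigma\in\Gal(\Qbar/\Q)$. It permutes the roots of $g_1$, so there is a permutation $\pi$ of $\{1,\ldots,d\}$ with $\sigma(t_i)=t_{\pi(i)}$. Applying $\sigma$ to the system and using $\sigma(A_j)=A_j$ gives
$$\sum_{i=1}^d \sigma(\alpha_i)\,t_{\pi(i)}^{j}=A_j,\qquad j=k,\ldots,k+d-1.$$
Reindexing by $m=\pi(i)$, this says the vector $(\beta_1,\ldots,\beta_d)$ with $\beta_m:=\sigma(\alpha_{\pi^{-1}(m)})$ also solves the original system. By uniqueness, $\beta_m=\alpha_m$ for all $m$, that is, $\sigma(\alpha_i)=\alpha_{\pi(i)}$. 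So $\sigma(t_i)=t_j$ implies $\sigma(\alpha_i)=\alpha_j$.

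For the consequences, I would argue as follows.

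\emph{Membership $\alpha_i\in\Q(t_i)$.} Any $\sigma$ fixing $\Q(t_i)$ satisfies $\sigma(t_i)=t_i$, hence $\sigma(\alpha_i)=\alpha_i$. By Galois theory (the $\alpha_i$ are algebraic by Cramer's rule), this gives $\alpha_i\in\Q(t_i)$.

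\emph{The set of conjugates.} As $\sigma$ ranges over $\Gal(\Qbar/\Q)$, the image $\sigma(t_i)$ ranges over all $t_j$, by irreducibility of $g_1$. So the orbit of $\alpha_i$ under $\Gal(\Qbar/\Q)$ is exactly $\{\alpha_{\pi(i)}\}=\{\alpha_1,\ldots,\alpha_d\}$. Since the set of Galois conjugates of $\alpha_i$ is precisely this orbit, the set $\{\alpha_1,\ldots,\alpha_d\}$, with repetitions removed, is exactly the set of conjugates of $\alpha_i$.

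There is no real obstacle in this argument. The only points needing care are that $\det(M)\neq 0$, which requires the $t_i$ to be distinct (true because $g_1$ is irreducible, hence separable), and getting the permutation bookkeeping right.
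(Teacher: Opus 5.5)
Your proof is correct and follows the paper's own argument: apply $\sigma$ to the system $M(\alpha_1,\ldots,\alpha_d)^T=(A_k,\ldots,A_{k+d-1})^T$, use that $\sigma$ fixes the integers $A_j$, rearrange the columns by the permutation of the roots, and conclude by uniqueness of the solution. Your explicit checks that $\det(M)\neq 0$ (distinct roots, and $t_i\neq 0$) and your Galois-theoretic derivation of the two consequences supply details that the paper simply calls immediate.
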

    \begin{proof}
        We have already seen that the linear system 
        \begin{equation}\label{eq:linear system}
        M(x_1,\ldots,x_d)^T=(A_k,\ldots,A_{k+d-1})^T
        \end{equation}
        has the unique solution $(\alpha_1,\ldots,\alpha_d)$.
        By applying $\sigma$ to the equation
        $$M(\alpha_1,\ldots,\alpha_d)^T=(A_k,\ldots,A_{k+d-1})^T$$
        and rearranging the columns on the LHS, we have that the system \eqref{eq:linear system} has a solution of the form
        $(\alpha_1',\ldots,\alpha_d')$ where $\alpha_j'=\sigma(\alpha_i)$.
        Since $(\alpha_1',\ldots,\alpha_d')=(\alpha_1,\ldots,\alpha_d)$,
        we have $\alpha_j=\sigma(\alpha_i)$. The remaining assertions follow immediately from the first assertion.
    \end{proof}

    \begin{corollary}\label{cor:Galois conjugates of detMi}
        Let $\sigma\in\Gal(\Qbar/\Q)$. For $1\leq i,j\leq d$, if $\sigma(t_i)=t_j$ then $\sigma(\det(M_i))=\pm\det(M_j)$.
    \end{corollary}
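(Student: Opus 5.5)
The plan is to compare $\det(M_i)$ and $\det(M_j)$ directly by applying $\sigma$ entrywise to $M_i$ and seeing which column permutation results. The entries of $M_i$ are powers $t_m^{j'}$ of the conjugates of $\theta_1$, sitting in every column except the $i$-th, and the integers $A_k,\ldots,A_{k+d-1}$, which fill the $i$-th column. Since $\sigma$ fixes integers and permutes the roots $t_1,\ldots,t_d$ of $g_1$, there is a permutation $\pi$ of $\{1,\ldots,d\}$ with $\sigma(t_m)=t_{\pi(m)}$ for all $m$, and by hypothesis $\pi(i)=j$. Because $\sigma$ is a field automorphism and the determinant is a polynomial in the entries with integer coefficients, $\sigma(\det(M_i))=\det(\sigma(M_i))$, where $\sigma(M_i)$ denotes entrywise application.

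The key step is to identify $\sigma(M_i)$ column by column. For $m\neq i$, its $m$-th column is $(t_{\pi(m)}^k,\ldots,t_{\pi(m)}^{k+d-1})^T$, which is the $\pi(m)$-th column of $M$. Its $i$-th column is still $(A_k,\ldots,A_{k+d-1})^T$. Hence $\sigma(M_i)$ is obtained from $M_j$ by permuting columns: its $m$-th column equals the $\pi(m)$-th column of $M_j$. This holds for $m\neq i$ because $\pi(m)\neq j$, so that column of $M_j$ is the $\pi(m)$-th column of $M$. It also holds for $m=i$ because $\pi(i)=j$ and the $j$-th column of $M_j$ is the vector of $A$'s.

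Permuting columns multiplies the determinant by $\operatorname{sgn}(\pi)=\pm1$, so $\sigma(\det(M_i))=\operatorname{sgn}(\pi)\det(M_j)$, which is the claim. Alternatively, one could derive it from Lemma~\ref{lem:Galois conjugates of alphai} and \eqref{eq:Cramer}, using $\det(M_i)=\alpha_i\det(M)$ and $\sigma(\det M)=\operatorname{sgn}(\pi)\det M$. The direct argument above avoids that detour.

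There is no real obstacle here. The only care needed is the bookkeeping that the $A$-column lands exactly in position $j$, which is precisely where the hypothesis $\sigma(t_i)=t_j$ enters.
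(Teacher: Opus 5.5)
Your proof is correct, but it takes a different route from the paper. The paper deduces the corollary from Lemma~\ref{lem:Galois conjugates of alphai}. It writes $\det(M_i)=\alpha_i\cdot(-1)^{dk}c_0^k\Delta$ via \eqref{eq:Cramer}, applies $\sigma$, uses $\sigma(\alpha_i)=\alpha_j$, and notes that $\sigma(\Delta)=\pm\Delta$ because $\Delta^2\in\Z$. This is essentially the ``alternative'' you mention at the end.

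You instead apply $\sigma$ entrywise to $M_i$ and identify $\sigma(M_i)$ as a column permutation of $M_j$. Your bookkeeping is right: the $A$-column lands in position $j$ exactly because $\pi(i)=j$, and bijectivity of $\pi$ keeps the other columns off position $j$.

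What your route buys is self-containedness. It does not need the invertibility of $M$, on which the uniqueness step in Lemma~\ref{lem:Galois conjugates of alphai} rests. It does not need the integrality of $\Delta^2$. It also pins the sign down explicitly as $\operatorname{sgn}(\pi)$.

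What the paper's route buys is brevity, since the lemma on the $\alpha_i$ is already in hand. The lemma's own proof uses the same column-rearrangement mechanism you use, applied to the linear system rather than directly to the determinant. In the paper's formulation the sign is $\sigma(\Delta)/\Delta$, which also equals $\operatorname{sgn}(\pi)$.
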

    \begin{proof}
        Lemma~\ref{lem:Galois conjugates of alphai} gives $\sigma(\alpha_i)=\alpha_j$. Then we use \eqref{eq:Cramer}
        and the fact that $\sigma(\Delta)=\pm \Delta$ since $\Delta^2\in\Z$ to finish the proof.
    \end{proof}
    
    \begin{proposition}\label{prop:H alpha1}
        $H(\alpha_1)\leq C_4\theta_1^{2k}$
    \end{proposition}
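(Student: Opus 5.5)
The plan is to avoid working with $\alpha_1$ directly, because the factor $c_0^k$ in the denominator of \eqref{eq:Cramer} can be large at finite places. Instead I pass to the ``shifted'' coefficients $\beta_i:=\alpha_i t_i^k$ for $1\leq i\leq d$. By \eqref{eq:alpha1...alphad} we have $A_{k+m}=\sum_{i}\beta_i t_i^m$ for $0\leq m\leq d-1$. So $(\beta_1,\ldots,\beta_d)$ is the unique solution of the Vandermonde system
$$V(x_1,\ldots,x_d)^T=(A_k,\ldots,A_{k+d-1})^T,\qquad V=(t_j^{m})_{0\leq m\leq d-1,\,1\leq j\leq d},$$
whose determinant is $\pm\Delta$. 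By Cramer's rule, $\beta_i=D_i/(\pm\Delta)$, where $D_i$ is the determinant of $V$ with its $i$-th column replaced by $(A_k,\ldots,A_{k+d-1})^T$. Each $D_i$ is an algebraic integer, since its entries are rational integers and powers of the algebraic integers $t_j$. The same Galois argument as in Lemma~\ref{lem:Galois conjugates of alphai} shows that the conjugates of $\beta_1$ are among $\beta_1,\ldots,\beta_d$.

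Next I bound $H(\beta_1)$. I work in $K=\Q(t_1,\ldots,t_d)$.
\begin{itemize}
\item[$\bullet$] At the finite places, $|\beta_1|_v\leq|\Delta|_v^{-1}$ because $D_1$ is integral. Since $\Delta^2$ is a nonzero integer, the product of $\max\{1,|\Delta|_v^{-1}\}$ over $v\in M_K^0$ is bounded by a constant depending only on $\lambda_1$.
\item[$\bullet$] At the archimedean places, $|\beta_1|_v$ runs through (normalized powers of) $|\beta_j|$ for $1\leq j\leq d$. Hadamard's inequality applied to $D_j$ gives $|\beta_j|\leq C\theta_1^{k}$. This uses $|A_j|<\theta_1^{j+1}+1$ and $|t_j|<1$ for $j\geq 2$, so every column is bounded by $O(1)$ except the replaced column, which is $O(\theta_1^{k+d})$; the factor $|\Delta|^{-1}$ is a constant.
\end{itemize}
Since the archimedean places carry total weight $1$ in the normalization of Section~2, these two bounds combine to give $H(\beta_1)\leq C\theta_1^k$.

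Finally, I write $\alpha_1=\beta_1\theta_1^{-k}$ and use Proposition~\ref{prop:height basic}(i):
$$H(\alpha_1)\leq H(\beta_1)H(\theta_1)^k.$$
Since $\theta_1$ is a Pisot number, it is an algebraic integer whose only conjugate outside the unit disk is itself. Hence $H(\theta_1)=\theta_1^{1/d}\leq\theta_1$, and therefore $H(\alpha_1)\leq C\theta_1^{k}\theta_1^{k}=C\theta_1^{2k}$. After enlarging $C_4$ if necessary, this is the claim.

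The main obstacle is the finite places. Bounding $\alpha_1$ directly through $\det(M_1)/(c_0^k\Delta)$ would give a contribution as large as $H(c_0)^{k}$, which is far too big. The shift to $\beta_i$ is exactly what removes this denominator, leaving only the fixed denominator $\Delta$; the cost is the factor $H(\theta_1)^k$, which is controlled because $\theta_1$ is Pisot.
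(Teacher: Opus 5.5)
Your proof is correct, but it handles the denominator differently from the paper. The paper works with $\alpha_1=\det(M_1)/((-1)^{dk}c_0^k\Delta)$ directly.

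\begin{itemize}
\item By Proposition~\ref{prop:inequalities using Hadamard}(i) and Corollary~\ref{cor:Galois conjugates of detMi}, every conjugate of $\det(M_1)$ is $\pm\det(M_j)$ and has modulus at most $C_4\theta_1^{2k}$. For $j\geq 2$ the matrix $M_j$ still contains the growing column $(t_1^{k+m})_m$, which accounts for the second factor $\theta_1^k$.
\item It keeps $\vert c_0^k\Delta\vert$ in the archimedean bound, and notes that $C_4\theta_1^{2k}/\vert c_0^k\Delta\vert\geq 1$ because $\vert c_0\vert=\theta_1\prod_{j\geq 2}\vert t_j\vert\leq\theta_1$.
\item It bounds each finite place by $\vert c_0^k\Delta\vert_v^{-1}$, using that $\det(M_1)$ is an algebraic integer.
\item The product formula for $c_0^k\Delta$ then cancels the finite and archimedean contributions of the denominator exactly.
\end{itemize}

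So the obstacle you describe in your last paragraph is not real. The finite places contribute exactly $\vert c_0\vert^k\vert\Delta\vert\leq\theta_1^k\vert\Delta\vert$, and this is offset by the archimedean denominators.

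Your substitution $\beta_i=\alpha_it_i^k$ instead removes the $k$-dependence from the coefficient matrix. The only denominator is then the fixed $\Delta$, and every conjugate $\beta_j$ is $O(\theta_1^k)$. You pay $H(\theta_1)^k=\theta_1^{k/d}$ to return to $\alpha_1$. This gives the slightly sharper bound $H(\alpha_1)\ll\theta_1^{(1+1/d)k}$. That sharpening is harmless but unnecessary, since Section~4 only uses a bound of the form $C^{k}$ and the choice of $N$ absorbs the constant.

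The paper's route has the advantage of reusing the bounds on $\det(M_i)$ already established, in a single product-formula computation. Yours re-runs Cramer's rule and Hadamard's inequality on a second linear system.
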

    \begin{proof}
        Let $K=\Q(\alpha_1,\Delta)$. Since $c_0$ and $\Delta^2$ are integers, we have
        $\vert c_0^k\Delta\vert_v=\vert c_0^k\Delta\vert^{d_v/[K:\Q]}$ for
        every $v\in M_K$.
        By Proposition~\ref{prop:inequalities using Hadamard} and Corollary~\ref{cor:Galois conjugates of detMi}, all the Galois conjugates of $\det(M_1)$ have moduli at most 
        $C_4\theta_1^{2k}$. Therefore
        $\vert\det(M_1)\vert_v\leq (C_4\theta_1^{2k})^{d_v/[K:\Q]}$ for every $v\in M_K^{\infty}$. Combining with \eqref{eq:Cramer} and using $\vert c_0\vert\leq\theta_1$, we have:
        $$\max\{1,\vert\alpha_1\vert_v\}\leq \max\{1,(C_4\theta_1^{2k})/(\vert c_0^k\Delta\vert)\}^{d_v/[K:\Q]}=\left(\frac{C_4\theta_1^{2k}}{\vert c_0^k\Delta\vert}\right)^{d_v/[K:\Q]}$$
        for every $v\in M_K^{\infty}$, where the last equality
        follows from
        $$\frac{C_4\theta_1^{2k}}{\vert c_0^k\Delta\vert}\geq \frac{C_4\theta^{k}}{\vert\Delta\vert}\geq 1$$
        when $n$ is sufficiently large. Therefore
        \begin{equation}\label{eq:Halpha1 archimedean}
        \prod_{v\in M_K^\infty}\max\{1,\vert\alpha\vert_v\}\leq C_4\theta_1^{2k}\prod_{v\in M_K^{\infty}}\vert 1/(c_0^k\Delta)\vert_v.
        \end{equation}
        Since $\det(M_1)$ is an algebraic integer, we have
        \begin{equation}\label{eq:Halpha1 finite}
        \max\{1,\vert \alpha_1\vert_v\}\leq \vert 1/(c_0^k\Delta)\vert_v
        \end{equation}
        for every $v\in M_K^0$. From \eqref{eq:Halpha1 archimedean}, \eqref{eq:Halpha1 finite}, and the product formula, we have
        $$H(\alpha_1)=\prod_{v\in M_K}\max\{1,\vert\alpha_1\vert_v\}\leq C_4\theta_1^{2k}.$$
    \end{proof}
       
    Proposition~\ref{prop:inequalities using Hadamard}(iii) and Proposition~\ref{prop:H alpha1} show that $a_n$ can be (strongly) approximated by the algebraic number $\alpha_1\in\Q(\theta_1)$; the strength of this approximation depends on how large $N$ is.
    We can now obtain similar results from Lemma~\ref{lem:linear recurrence} to Proposition~\ref{prop:H alpha1} using similar arguments
    when replacing the data $(\theta_1,a_n,k)$ by $(\theta_2,b_n,\ell)$. This gives a similar approximation to $b_n$ by an element in $\Q(\theta_2)$. While such approximation to $a_n$ and $b_n$ can be done ``separately'' like we did, it is indeed crucial to obtain $k$ and $\ell$ ``simultaneously'' to guarantee the inequalities in Proposition~\ref{prop:k ell exist}(e).
    
    We combine the key results discussed throughout this section into the following:
    \begin{theorem}\label{thm:everything}
        For every sufficiently large $n$, there exist $k,\ell\in\N$, $\alpha\in \Q(\theta_1)$, and $\beta\in\Q(\theta_2)$ (also depending on $N$ and $n$) satisfying the following properties:
            \begin{itemize}
                \item [$\bullet$] $\lfloor e_1(n)/N^{C_2}\rfloor\leq k\leq e_1(n)/N^2$, $\lfloor e_2(n)/N^{C_2}\rfloor\leq \ell\leq e_2(n)/N^2$, and $1/C_3<k/\ell<C_3$.
                \item [$\bullet$] $\vert a_n-\alpha\vert\leq C_5/\theta_1^{(N-2)k}$ and $H(\alpha)\leq C_4\theta_1^{2k}$. 
                \item [$\bullet$] $\vert b_n-\beta\vert\leq C_6/\theta_2^{(N-2)\ell}$ and $H(\beta)\leq C_7\theta_2^{2\ell}$.
                \item [$\bullet$] $1/2<\alpha<\theta_1+(1/2)$ and $1/2<\beta<\theta_2+(1/2)$.
            \end{itemize}
    \end{theorem}

    \section{Proof of Theorem~\ref{thm:algebraic main term}}
    Let $\lambda_1$, $\lambda_2$, $f_1$, and $f_2$ be as in Theorem~\ref{thm:algebraic main term}. Assume that $\lambda_1$ and $\lambda_2$ are multiplicatively independent and 
    there exist $\delta>0$ and a sequence
    $(u_n)_n$ approaching infinity such that 
    $$\vert G_{\lambda_1}(f_1(u_n))\cdot G_{\lambda_2}(f_2(u_n))\vert\geq \delta$$
    for every $n$. We will arrive at a contradiction.

    Throughout this section $C_8,C_9,\ldots$ denote positive constants depending only on $\lambda_1$, $\lambda_2$, $f_1$, $f_2$, and $\delta$. Occasionally, we use the big $O$ notation in which the implied constants follow the same dependency convention as the $C_i$'s. We fix a large positive integer $N>2$ that will be specified later while $n$ denotes an arbitrarily large integer. Let $\theta_1=\lambda_1^{-1}$ and $\theta_2=\lambda_2^{-1}$. By our assumption on $f_1$ and $f_2$, there exist positive numbers $c_1,c_2\in\Q$, $\gamma_1,\gamma_2\in\Qbar$, and $\epsilon_1,\epsilon_2\in\R$ such that
    $$\vert f_1(u)-\gamma_1u^{c_1}\vert\leq C_8u^{c_1-\epsilon_1}\ 
    \text{and}\ \vert f_2(u)-\gamma_2u^{c_2}\vert\leq C_9u^{c_2-\epsilon_2}.$$

    Let $r_n=f_1(u_n)$ and $s_n=f_2(u_n)$. Then thanks to the above asymptotic properties of $f_1$ and $f_2$, there exists $C_{10}>1$
    such that 
    $$r_n^{1/C_{10}}<s_n<r_n^{C_{10}}.$$
    Therefore the sequences $(r_n)_n$ and $(s_n)_n$ satisfy the conditions
    required in Section~\ref{sec:diophantine} (with $C_{10}$ playing the role of $B$ there). Write
    $$r_n=a_n\theta_1^{e_1(n)}\ \text{and}\ s_n=b_n\theta_2^{e_2(n)}$$
    as in Section~\ref{sec:diophantine}. As in \eqref{eq:C0}, we have $C_{11}>1$ such that
    \begin{equation}\label{eq:C11}
        \frac{1}{C_{11}}<\frac{e_1(n)}{e_2(n)}<C_{11}.
    \end{equation}
    
    Theorem~\ref{thm:everything}
    implies that for every sufficiently large $n$, there exist integers $k_n,\ell_n\in\N$, $\alpha_n\in\Q(\theta_1)$, and $\beta_n\in \Q(\theta_2)$ such that:
    \begin{align}\label{eq:from Section 3}
         \begin{split}
          & \lfloor e_1(n)/N^{C_{12}}\rfloor\leq k_n\leq e_1(n)/N^2,\ \lfloor e_2(n)/N^{C_{12}}\rfloor\leq \ell_n\leq e_2(n)/N^2,\\
         & 1/C_{13}<k_n/\ell_n<C_{13},\\
          & \vert a_n-\alpha_n\vert\leq C_{14}/\theta_1^{(N-2)k_n},\ H(\alpha_n)\leq  C_{14}\theta_1^{2k_n},\\
        & \vert b_n-\beta_n\vert\leq C_{14}/\theta_2^{(N-2)\ell_n},\ H(\beta_n)\leq C_{14}\theta_2^{2\ell_n},\\
         & 1/2<\alpha_n<\theta_1+(1/2)\ \text{and}\ 1/2<\beta_n<\theta_2+(1/2).
        \end{split}
    \end{align}
    We emphasize that although these $k_n$, $\ell_n$, $\alpha_n$, and $\beta_n$ also depend on $N$, the constants $C_i$'s and the implied ones in the big $O$ notation do not.
    
    From Remark~\ref{rem:u in terms of w}, we have
    \begin{equation}\label{eq:un in rn and sn}
        u_n=\gamma_1^{-1/c_1}r_n^{1/c_1}+ O(r_n^{(1-\epsilon_1)/c_1})\ \text{and}\ 
         u_n=\gamma_2^{-1/c_2}s_n^{1/c_2} + O(s_n^{(1-\epsilon_2)/c_2})
    \end{equation}

    Let $D$ be the common denominator of $1/c_1$ and $1/c_2$ so that both $D/c_1$ and $D/c_2$ are positive integers. From \eqref{eq:un in rn and sn}, we have:
    \begin{equation}\label{eq:un^D for theta1}
        u_n^D=\gamma_1^{-D/c_1}r_n^{D/c_1} +O(r_n^{(D-\epsilon_1)/c_1})
    \end{equation}
    and
    \begin{equation}\label{eq:un^D for theta2}
         u_n^D=\gamma_2^{-D/c_2}s_n^{D/c_2}+ O(s_n^{(D-\epsilon_2)/c_2}).
    \end{equation}
%    This gives 
%    \begin{equation}\label{eq:un^D sim}
%    \gamma_1^{-D/c_1}r_n^{D/c_1}\sim u_n^D \sim  \gamma_2^{-D/c_2}s_n^{D/c_2}\ \text{as $n\to\infty$}.
%    \end{equation}

    Our next step is to use the algebraic approximations
    $\alpha_n$ and $\beta_n$ of $a_n$ and $b_n$ to replace
    $\gamma_1^{-D/c_1}r_n^{D/c_1}$ and 
    $\gamma_2^{-D/c_2}s_n^{D/c_2}$ in \eqref{eq:un^D for theta1} and
    \eqref{eq:un^D for theta2} 
    by corresponding algebraic numbers. From $\vert a_n-\alpha_n\vert\leq C_{14}/\theta_1^{(N-2)k_n}$ and the uniform bound on $\alpha_n$ in \eqref{eq:from Section 3}, 
    we have
    $a_n^{D/c_1}=\alpha_n^{D/c_1}+O(1/\theta_1^{(N-2)k_n})$. Therefore
    \begin{align}\label{eq:replace an by alphan}
    \begin{split}
    \gamma_1^{-D/c_1}r_n^{D/c_1}&=\gamma_1^{-D/c_1}a_n^{D/c_1}\theta_1^{e_1(n)D/c_1}\\
    &=\gamma_1^{-D/c_1}\alpha_n^{D/c_1}\theta_1^{e_1(n)D/c_1}+O(\theta_1^{(e_1(n)D/c_1)-(N-2)k_n})
    \end{split}
    \end{align}
    Since $\displaystyle k_n\leq e_1(N)/N^2$, we can choose $N$ so that the $O$-term in \eqref{eq:replace an by alphan} dominates
    the one in \eqref{eq:un^D for theta1}. More precisely,
    $$\frac{r_n^{(D-\epsilon_1)/c_1}}{\theta_1^{(e_1(n)D/c_1)-(N-2)k_n}}=O(\theta_1^{(N-2)k_n-(e_1(n)\epsilon_1/c_1)})=o(1)$$
    as $n\to\infty$ if $N$ satisfies
    \begin{equation}\label{eq:N condition epsilon1/c1}
    \frac{N-2}{N^2}<\frac{\epsilon_1}{c_1}
    \end{equation}
    so that
    $$(N-2)k_n-\frac{e_1(N)\epsilon_1}{c_1}<e_1(N)\left(\frac{N-2}{N^2}-\frac{\epsilon_1}{c_1}\right)$$
    approaching $-\infty$ as $n\to\infty$. Similarly,
    \begin{equation}\label{eq:replace bn by betan}
        \gamma_2^{-D/c_2}s_n^{D/c_2}=\gamma_2^{-D/c_2}\beta_n^{D/c_2}\theta_2^{e_2(n)D/c_2}+O(\theta_2^{(e_2(n)D/c_2)-(N-2)\ell_n})
    \end{equation}
    and the $O$-term in \eqref{eq:replace bn by betan} dominates
    the $O$-term in \eqref{eq:un^D for theta2} if $N$ satisfies
    \begin{equation}\label{eq:N condition epsilon2/c2}
        \frac{N-2}{N^2}<\frac{\epsilon_2}{c_2}
    \end{equation}
    Combining \eqref{eq:un^D for theta1} and \eqref{eq:replace an by alphan} and the comparison between the $O$-terms, we have:
    \begin{equation}\label{eq:after replacing alphan}
        \vert u_n^D-\gamma_1^{-D/c_1}\alpha_n^{D/c_1}\theta_1^{e_1(n)D/c_1}\vert=O(\theta_1^{(e_1(n)D/c_1)-(N-2)k_n})
    \end{equation}
    for all sufficiently large $n$. Similarly,
    \begin{equation}\label{eq:after replacing betan}
        \vert u_n^D - \gamma_2^{-D/c_2}\beta_n^{D/c_2}\theta_2^{e_2(n)D/c_2}\vert=O(\theta_2^{(e_2(n)D/c_2)-(N-2)\ell_n}) 
    \end{equation}
    for all sufficiently large $n$. 
    
    Then we have
    $$\gamma_1^{-D/c_1}\alpha_n^{D/c_1}\theta_1^{e_1(n)D/c_1}\sim u_n^D \sim \gamma_2^{-D/c_2}\beta_n^{D/c_2}\theta_2^{e_2(n)D/c_2}$$
    as $n\to\infty$. Together with the uniform bounds on $\alpha_n$ and $\beta_n$ in \eqref{eq:from Section 3}, we have:
    \begin{equation}\label{eq:C15}
        \frac{1}{C_{15}}< \frac{\theta_1^{e_1(n)D/c_1}}{\theta_2^{e_2(n)D/c_2}}<C_{15}
    \end{equation}
    for all sufficiently large $n$. From \eqref{eq:after replacing alphan} and \eqref{eq:after replacing betan}, we have
    \begin{align}
        \begin{split}
        &\vert\gamma_1^{-D/c_1}\alpha_n^{D/c_1}\theta_1^{e_1(n)D/c_1}-\gamma_2^{-D/c_2}\beta_n^{D/c_2}\theta_2^{e_2(n)D/c_2}\vert\\
        \leq & C_{16}\left(\theta_1^{(e_1(n)D/c_1)-(N-2)k_n}+\theta_2^{(e_2(n)D/c_2)-(N-2)\ell_n}\right).
        \end{split}
    \end{align}
    Then we divide both sides by $\gamma_1^{-D/c_1}\beta_n^{D/c_2}\theta_2^{e_2(n)D/c_2}$
    and use \eqref{eq:C15} to obtain
    \begin{equation}\label{eq:C17}
        \left\vert\frac{\alpha_n^{D/c_1}\theta_1^{e_1(n)D/c_1}}{\beta_n^{D/c_2}\theta_2^{e_2(n)D/c_2}}-\frac{\gamma_2^{-D/c_2}}{\gamma_1^{-D/c_1}}\right\vert\leq C_{17}(\theta_1^{-(N-2)k_n}+\theta_2^{-(N-2)\ell_n})
    \end{equation}
    for all sufficiently large $n$. 
    We recall the bound $1/C_{13}<k_n/\ell_n<C_{13}$ in \eqref{eq:from Section 3}. This yields
    $C_{18}$ such that
    $$k_n>\frac{k_n+\ell_n}{C_{18}}\ \text{and}\ \ell_n>\frac{k_n+\ell_n}{C_{18}}.$$
    Therefore, for all sufficiently large $n$, we have
    \begin{align}
    \begin{split}    
    \text{the RHS of \eqref{eq:C17}}&<C_{17}(\theta_1^{-(N-2)(k_n+\ell_n)/C_{18}}+\theta_2^{-(N-2)(k_n+\ell_n)/C_{18}})\\
    &<C_{19}^{-(N-2)(k_n+\ell_n)}
    \end{split}
    \end{align}
    with $C_{19}>1$ (for example, any $C_{19}>1$ such that $C_{19}<\min\{\theta_1^{1/C_{18}},\theta_2^{1/C_{18}}\}$ will work). Overall, we have
    \begin{equation}\label{eq:approximation with C19}
        \left\vert\frac{\alpha_n^{D/c_1}\theta_1^{e_1(n)D/c_1}}{\beta_n^{D/c_2}\theta_2^{e_2(n)D/c_2}}-\frac{\gamma_2^{-D/c_2}}{\gamma_1^{-D/c_1}}\right\vert<C_{19}^{-(N-2)(k_n+\ell_n)}
    \end{equation}
    for all sufficiently large $n$.
    
    We now have approximations of
    $\gamma_2^{-D/c_2}/\gamma_2^{-D/c_1}$ by the algebraic numbers
    $$t_n:=\frac{\alpha_n^{D/c_1}\theta_1^{e_1(n)D/c_1}}{\beta_n^{D/c_2}\theta_2^{e_2(n)D/c_2}}$$
    and with an appropriate choice of $N$ we can apply Roth's theorem to arrive at a contradiction, as follows.

    Let $C_{20}=\max\{H(\theta_1^{D/c_1}),H(\theta_2^{D/c_2})\}$. Then we apply Proposition~\ref{prop:height basic}(i) and Proposition~\ref{prop:Minkowski} to get $C_{21}>1$
    such that
    \begin{equation}\label{eq:C20 C21}
        C_{21}^{e_1(n)+e_2(n)}\leq H(\theta_1^{e_1(n)D/c_1}/\theta_2^{e_2(n)D/c_2})\leq C_{20}^{e_1(n)+e_2(n)}.
    \end{equation}
    Proposition~\ref{prop:height basic}(i) and the upper bounds on $H(\alpha_n)$ and $H(\beta_n)$ in \eqref{eq:from Section 3} give
    \begin{equation}\label{eq:C22 C23}
        H(\alpha_n^{D/c_1}/\beta_n^{D/c_2})\leq C_{22}\theta_1^{2k_nD/c_1}\theta_2^{2\ell_nD/c_2}\leq C_{23}^{k_n+\ell_n}\leq C_{23}^{(e_1(n)+e_2(n))/N^2}
    \end{equation}
    for all sufficiently large $n$ (for example, choosing any $C_{23}>\max\{\theta_1^{2D/c_1},\theta_2^{2D/c_2}\}$ will work).
    We now assume that $N$ satisfies
    \begin{equation}\label{eq:N condition C23 C21}
        C_{23}^{1/N^2}<C_{21}.        
    \end{equation}
    Then Proposition~\ref{prop:height basic}(i), \eqref{eq:C20 C21}, and \eqref{eq:C22 C23} imply
    \begin{equation}\label{eq:C24}
    (C_{21}/C_{23}^{1/N^2})^{e_1(n)+e_2(n)}<H(t_n)<C_{24}^{e_1(n)+e_2(n)}
    \end{equation}
    for all sufficiently large $n$ (by taking, for example, $C_{24}=C_{20}C_{23}$). Hence $H(t_n)\to\infty$ as $n\to\infty$.

    We now let $K=\Q(\theta_1,\theta_2,\gamma_1,\gamma_2)$. Let $S$ be the finite subset of $M_K$
    containing $M_K^{\infty}$ and all the finite places $v\in M_K^0$ such that
    either $\vert \theta_1\vert_v\neq 1$ or $\vert \theta_2\vert_v\neq 1$; in other words, $\theta_1$ and $\theta_2$ are $S$-units. For each large $n$, we define
    $$S_{n,>}=\{v\in S:\ \vert t_n\vert_v>1\}\ \text{and}\ S_{n,\leq}=\{v\in S:\ \vert t_n\vert_v\leq 1\}.$$
    Since there are only finitely many possibilities for the pair $(S_{n,>},S_{n,\leq})$
    of subsets of $S$, some pair must occur infinitely often. This means there exist an infinite set
    $\scrI$ of positive integers and 
    subsets
    $S_{>}$ and $S_{\leq}$ of $S$ such that for every $n\in\scrI$, we have
    $$S_{>}=\{v\in S:\ \vert t_n\vert_v>1\}\ \text{and}\ S_{\leq}=\{v\in S:\ \vert t_n\vert_v\leq 1\}.$$

    Let $\tilde{v}$ denote the restriction of the usual $\vert\cdot\vert$ from $\C$
    to the (real) field $K$. From \eqref{eq:approximation with C19}, we have:
    $$C_{25}<\vert t_n\vert<C_{26}$$
    for all sufficiently large $n$. Therefore
    \begin{equation}\label{eq:C27 C28}
        C_{27}<\vert t_n\vert_{\tilde{v}}=\vert t_n\vert^{1/[K:\Q]}<C_{28}
    \end{equation}
    We now define the family of elements $(t_v)_{v\in S}$ as follows:
    \begin{equation}
    t_v = \begin{cases}
  \gamma_2^{-D/c_2}/\gamma_1^{-D/c_1} & \text{if } v=\tilde{v} \\
  0 & \text{if } v\in S_{\leq}\setminus\{\tilde{v}\}\\
  \text{the symbol $\infty$} & \text{if } v\in S_{>}\setminus\{\tilde{v}\}
\end{cases}
    \end{equation}

    Let $n\in\scrI$, we have
    \begin{equation}\label{eq:LHS Roth}
        \prod_{v\in S}\min\{1,\vert t_n-t_v\vert_v\}<C_{19}^{-(N-2)(k_n+\ell_n)}\prod_{v\in S_{\leq}\setminus\{\tilde{v}\}} \vert t_n\vert_v\prod_{v\in S_{>}\setminus\{\tilde{v}\}}\frac{1}{\vert t_n\vert_v}
    \end{equation}
    thanks to \eqref{eq:approximation with C19} and our definition of $(t_v)_{v\in S}$.

    Since $\theta_1$ and $\theta_2$ are $S$-units, for all sufficiently large $n\in\scrI$, we have
    \begin{align}
        \begin{split}
            H(t_n)&=\prod_{v\in M_K}\max\{1,\vert t_n\vert_v\}\\
            &=\max\{1,\vert t_n\vert_{\tilde{v}}\}\cdot\prod_{v\notin S}\max\{1,\vert\alpha_n^{D/c_1}/\beta_n^{D/c_2}\vert_v\}\cdot\prod_{v\in S_{>}\setminus\{\tilde{v}\}}\vert t_n\vert_v\\
            &<C_{28}H(\alpha_n^{D/c_1}/\beta_n^{D/c_2})\prod_{v\in S_{>}\setminus\{\tilde{v}\}}\vert t_n\vert_v
        \end{split}
    \end{align}
    thanks to \eqref{eq:C27 C28}. Combining this with
    \eqref{eq:C22 C23}, we have
    \begin{equation}\label{eq:S> terms}
        \prod_{v\in S_{>}\setminus\{\tilde{v}\}}\frac{1}{\vert t_n\vert_v}<C_{28}C_{23}^{k_n+\ell_n}H(t_n)^{-1}
    \end{equation}
    Similarly, for all sufficiently large $n\in\scrI$, we have
    \begin{align}
        \begin{split}
            H(t_n)&=H(1/t_n)=\prod_{v\in M_K}\max\{1,1/\vert t_n\vert_v\}\\
            &<C_{29}H(\beta_n^{D/c_2}/\alpha_n^{D/c_1})\prod_{v\in S_{\leq}\setminus\{\tilde{v}\}}\frac{1}{\vert t_n\vert_v}
        \end{split}
    \end{align}
    Combining this with \eqref{eq:C22 C23}, we have
    \begin{equation}\label{eq:S< terms}
        \prod_{v\in S_{\leq}\setminus\{\tilde{v}\}} \vert t_n\vert_v<C_{29}C_{23}^{k_n+\ell_n}H(t_n)^{-1}
    \end{equation}

    We are now ready to specify $N$. At the beginning, we fix an integer $N>2$
    satisfying \eqref{eq:N condition epsilon1/c1}, \eqref{eq:N condition epsilon2/c2}, \eqref{eq:N condition C23 C21}, and the following inequality
    \begin{equation}\label{eq:N condition C19 C23}
        C_{19}^{N-2}>C_{23}^2.
    \end{equation}
    Then \eqref{eq:LHS Roth}, \eqref{eq:S> terms}, \eqref{eq:S< terms}, and \eqref{eq:N condition C19 C23} give
    \begin{align}\label{eq:LHS Roth simplified}
    \begin{split}
        \prod_{v\in S}\min\{1,\vert t_n-t_v\vert_v\}&<C_{23}^{-(k_n+\ell_n)}H(t_n)^{-2}\\
        &<C_{23}^{-(e_1(n)+e_2(n))/(2N^{C_{12}})}H(t_n)^{-2}
    \end{split}
    \end{align}
    for every sufficiently large $n\in \scrI$, where the second inequality follows from the lower bounds for $k_n$ and $\ell_n$ in
    \eqref{eq:from Section 3}. Define $\epsilon>0$ by:
    $$C_{23}^{1/(2N^{C_{12}})}=C_{24}^{\epsilon}$$
    Then \eqref{eq:C24} and \eqref{eq:LHS Roth simplified} yield
    $$\prod_{v\in S}\min\{1,\vert t_n-t_v\vert_v\}<H(t_n)^{-2-\epsilon}$$
    for all sufficiently large $n\in\scrI$. Then Theorem~\ref{thm:Roth} gives that
    $\{t_n:\ n\in\scrI\}$ is a finite set. This contradicts the property that $H(t_n)\to\infty$ as $n\to\infty$ and we finish the proof.

	\bibliographystyle{amsalpha}
	\bibliography{Salem3} 	
\end{document}